\newtheorem{Theorem}{Theorem}
\newtheorem{Definition}[Theorem]{Definition}  
\newtheorem{Lemma}[Theorem]{Lemma}
\numberwithin{equation}{section} 
\newcommand{\R}{\mathbb{R}} 
\newcommand{\N}{\mathbb{N}} 
\newcommand{\abs}[1]{\left\vert #1 \right\vert}
\newcommand{\norm}[1]{\left\Vert #1 \right\Vert}
\newcommand{\paren}[1]{\left( #1 \right)}
\newcommand{\brac}[1]{\left\{ #1 \right\}}
\newcommand{\DEL}[1]{}
\newcommand{\ps@bw}{\ps@empty%
  \renewcommand{\@oddfoot}%
  {\kuerzel \hfil {\footnotesize --- bitte wenden ---}}}
\newcommand{\ps@last}{\ps@empty%
  \renewcommand{\@oddfoot}{\kuerzel \hfil {\tiny\texturl}}}
\newcommand{\itemii}[1]{\refstepcounter{enumi}\hfill
  \hbox to 0.5\textwidth {\hskip \leftmargin \hskip -\labelwidth
    \hskip -\labelsep \hbox to\labelwidth {\makelabel{\@itemlabel}}%
    \hskip \labelsep #1\hfil}}
\newcommand{\itemiii}[1]{\refstepcounter{enumii}\hfill
  \hbox to 0.5\textwidth {\hskip \leftmargin \hskip -\labelwidth
    \hskip -\labelsep \hbox to\labelwidth {\makelabel{\@itemlabel}}%
    \hskip \labelsep #1\hfil}}
\DeclareMathOperator{\supp}{supp}
\DeclareMathOperator{\Div}{div}
\DeclareMathOperator{\curl}{curl}
\DeclareMathOperator{\esssup}{ess\ sup}
\DeclareMathOperator{\essinf}{ess\ inf}
\DeclareMathOperator{\rad}{rad}
\DeclareMathOperator{\cyl}{cyl}
\begin{document}

\makeatother
\makeatletter
\@removefromreset{Theorem}{section}
\makeatother


\title[Cylindrically symmetric ground states to a nonlinear curl-curl equation]{Existence of cylindrically symmetric ground states to a nonlinear curl-curl equation with non-constant coefficients}

\date{\today}

\author{Andreas Hirsch}
\address{A. Hirsch \hfill\break 
Institute for Analysis, Karlsruhe Institute of Technology (KIT), \hfill\break
D-76128 Karlsruhe, Germany}
\email{andreas.hirsch@kit.edu}

\author{Wolfgang Reichel}
\address{W. Reichel \hfill\break 
Institute for Analysis, Karlsruhe Institute of Technology (KIT), \hfill\break
D-76128 Karlsruhe, Germany}
\email{wolfgang.reichel@kit.edu}
  
\subjclass[2000]{Primary: 35J20, 58E15; Secondary: 47J30, 35Q60}

\keywords{curl-curl problem, nonlinear elliptic equations, cylindrical symmetry, variational methods}
  
\begin{abstract}
We consider the nonlinear curl-curl problem $\nabla\times\nabla\times U + V(x) U=f(x,\abs{U}^2)U$ in $\R^3$ related to the nonlinear Maxwell equations  with Kerr-type nonlinear material laws. We prove the existence of a symmetric ground-state type solution for a bounded, cylindrically symmetric coefficient $V$ and subcritical cylindrically symmetric nonlinearity $f$. The new existence result extends the class of problems for which ground-state type solutions are known. It is based on compactness properties of symmetric functions \cite{Lions2, Lions}, new rearrangement type inequalities from \cite{Brock} and the recent extension of the Nehari-manifold technique from \cite{SW}.  
\end{abstract}

\maketitle

\section{Introduction} 

We consider the system
\begin{align} \label{grund}
\nabla\times\nabla\times U + V(x)U = f(x,\abs{U}^2) U \text{ in } \R^3
\end{align}
where $V \in L^\infty(\R^3)$ and $f: \R^3\times [0,\infty)\to [0,\infty)$ is a non-negative Carath\'{e}odory function growing at infinity with a power at most $\frac{p-1}{2}$ for $p\in(1,5)$. The particular feature of \eqref{grund} is the curl-curl operator. It arises in specific models for standing waves in Maxwell's equations with Kerr-type nonlinear material laws where $f(x,|U|^2)U = \Gamma(x)|U|^2 U$. For a detailed physical motivation of (\ref{grund}) see \cite{BDPR}. 

\medskip

We look for $\R^3$-valued weak solutions $U$ in a cone $K_{4,1}$ of functions with suitable symmetries and $U\in L^2(\R^3) \cap L^{p+1}(\R^3)$, $\nabla\times U \in L^2(\R^3)$. The condition that $0$ lies below the spectrum of $\curl\curl+V(x)$ allows us to find ground-state type critical points of a functional $J(u) = \frac{1}{2} \|u\|^2-I(u)$, cf. \eqref{def_J}, restricted to the so-called Nehari-manifold. The basic idea of applying symmetrizations to minimizing sequences on the Nehari-manifold goes back to Stuart \cite{Stuart} in the context of the stationary nonlinear Schr\"odinger equation. Compared to \cite{Stuart} the assumptions on the nonlinearity $f$ can be substantially weakened beyond the classical Ambrosetti-Rabinowitz condition. This is based on three important ingredients: 
\begin{itemize}
\item the recent extension of the Nehari-manifold method due to Szulkin and Weth~\cite{SW},
\item the weak sequential continuity of functionals $I(u)$ and $I'(u)[u]$ on $K_{4,1}$ due to compactness properties of symmetric functions by Lions~\cite{Lions2, Lions},
\item new rearrangement inequalities for general nonlinearities due to Brock~\cite{Brock}.
\end{itemize}
Using the combination of these ingredients our main result of Theorem~\ref{gsthm} substantially extends the know results on the existence of ground-state type solutions for \eqref{grund}.

\medskip

Benci, Fortunato \cite{benci_fortunato_archive} and Azzollini, Benci, D'Aprile, Fortunato in \cite{Fortunato}
were among the first to consider the constant coefficient case of \eqref{grund} with $V\equiv 0$. Their method was based on cylindrical symmetries of the vector-fields $U$, cf. \cite{daprile_siciliano} for a different class of symmetries. The case where $f(x,\abs{U}^2) U= \Gamma(x) \abs{U}^{p-1}U$ with periodic coefficients $V$ and $\Gamma$ has been treated in \cite{BDPR}. 
In \cite{Mederski_2014} Mederski considered \eqref{grund} where $f(x,|U|^2)U$ is replaced by, e.g.,  $\Gamma(x) g(U)$ with
$\Gamma>0$ periodic and bounded, $V\leq 0$, $V\in L^\frac{p+1}{p-1}(\R^3)\cap L^\frac{q+1}{q-1}(\R^3)$ and $g(U)\sim |u|^{p-1} U$ if $|U|\gg 1$ and $g(U)\sim |U|^{q-1}u$ if $|U|\ll 1$ for $1<p<5<q$. A remarkable feature of Mederski's work is that \eqref{grund} can be treated without assuming special symmetries of the field $U$. The nonlinear curl-curl problem on bounded domains with the boundary condition $\nu\times U=0$ has been elaborated in \cite{bartsch_mederski, bartsch_mederski_2}.

\medskip

An important feature of \cite{Fortunato} is the use of cylindrically symmetric ansatz functions for $U$. Here we make a slightly different ansatz of the form
\begin{align} \label{richtigeransatz}
U(x)=u(r,z) \begin{pmatrix}-x_2\\x_1\\0\end{pmatrix} \text{ where } r=\sqrt{x_1^2+x_2^2},\  z=x_3.
\end{align}
Moreover, we assume cylindrically symmetric coefficients $V(x)=V(r,z),f(x,\abs{U}^2)=f(r,z,\abs{U}^2)$. For $U$ of the form (\ref{richtigeransatz}) we see that $\Div U=0$, and hence (\ref{grund}) reduces to the scalar equation
\begin{align} \label{scaequ}
-\frac{1}{r^3}\frac{\partial}{\partial r}\paren{r^3 \frac{\partial u}{\partial r}} -\frac{\partial^2 u}{\partial z^2} + V(r,z) u = f(r,z,r^2u^2)u \quad \mbox{ for }\quad (r,z)\in\Omega\coloneqq (0,\infty)\times\R.
\end{align}
It turns out that a suitable space to consider \eqref{scaequ} is given by 
\begin{align*}
H^1_{\cyl}(r^3drdz)\coloneqq & \brac{v\colon (0,\infty)\times\R\to\R: v, \frac{\partial v}{\partial r}, \frac{\partial v}{\partial z}\in L^2_{\cyl}(r^3drdz)},\\
L^2_{\cyl}(r^3drdz)\coloneqq & \brac{v\colon (0,\infty)\times\R\to\R: \int_\Omega v(r,z)^2 r^3d(r,z)<\infty},
\end{align*}
cf. Section~\ref{2} for more details on these spaces. Weak solutions of (\ref{scaequ}) arise as critical points of the functional
\begin{align} \label{def_J}
J(u)=\frac{1}{2}\int_{\Omega} \paren{\vert\nabla_{r,z} u\vert^2+V(r,z) u^2} r^3 d(r,z) -\int_{\Omega} \frac{1}{2r^2} F(r,z,r^2u^2) r^3 d(r,z), \ \ u\in H^1_{\cyl}(r^3 drdz),
\end{align}
where $F(r,z,t)\coloneqq \int_0^t f(r,z,s)\,ds$ and $\nabla_{r,z} \coloneqq \paren{\frac{\partial}{\partial r}, \frac{\partial}{\partial z}}$. A ground state $u$ of (\ref{scaequ}) is defined as a weak solution of (\ref{scaequ}) in the Nehari-manifold
\begin{align*}
M\coloneqq \brac{v\in H^1_{\cyl}(r^3 drdz)\setminus \{0\}: \int_\Omega \paren{\vert\nabla_{r,z} v\vert^2+V(r,z)v^2} r^3d(r,z) = \int_\Omega f(r,z,r^2v^2)v^2r^3 d(r,z)}
\end{align*}
such that
\begin{align*}
J(u)=\inf_{v\in M} J(v),
\end{align*}
see the classical papers \cite{Nehari1}, \cite{Nehari2}. We find ground states of (\ref{scaequ}) under additional assumptions on $V$ and $f$. To state these assumptions we need the notion of Steiner-symmetrization, cf. Chapter 3 in \cite{LiebLoss}. The Steiner-symmetrization (also called symmetric-deacreasing rearrangement) of a cylindrical function $g=g(r,z)$ with respect to $z$ is denoted by $g^\star$. We say that $g$ is \textit{Steiner-symmetric} if $g$ coincides with its Steiner-symmetrization with respect to $z$, keeping the $r$-variable fixed. A function $h\in L^\infty(\Omega)$ is \textit{ reversed Steiner-symmetric} if $\paren{\esssup h-h}^\star=\esssup h-h$ holds true.

\medskip

Now we can state our assumptions on $f$.
\begin{itemize}
\item[(i)] $f: \Omega\times [0,\infty)\to \R$ is a Carath\'{e}odory function with $0 \leq f(r,z,s)\leq c(1+s^\frac{p-1}{2})$ for some $c>0$ and $p\in (1,5)$,
\item[(ii)] $f(r,z,s)=o(1)$ as $s\to 0$ uniformly in $r, z\in [0,\infty)\times \R$,
\item[(iii)] $f(r,z,s)$ strictly increasing in $s\in [0,\infty)$,
\item[(iv)] $\frac{F(r,z,s)}{s}\to \infty$ as $s\to\infty$ uniformly in $r, z\in [0,\infty)\times \R$,
\item[(v)] for all $r \in [0,\infty)$, $s\geq 0$ and $\sigma>0$ the function 
$$
\varphi_\sigma(r,z,s)\coloneqq f(r,z,(s+\sigma)^2)(s+\sigma)^2-f(r,z,s^2)s^2
$$ 
is symmetrically nonincreasing in $z$.
\end{itemize}
Conditions (ii)--(iv) are inspired by the work of Szulkin and Weth \cite{SW}. Namely, if we translate (ii)--(iv) into conditions for $\tilde f(r,z,s):= f(r,z,r^2 s^2)s$ then they become identical to (ii)--(iv) of Theorem 20 from \cite{SW}. Condition (v) is used to prove the rearrangement inequality of Lemma~\ref{rearrang_inequ} and it is due to Brock \cite{Brock}.

\medskip

Next we state our main result.
\begin{Theorem} \label{gsthm}
Let $V\in L^\infty (\Omega)$ be reversed Steiner-symmetric such that the map 
\begin{align} \label{aequivnorm}
\norm{\cdot}\colon H^1_{\cyl}(r^3drdz)\to \R; u \mapsto \left(\int_\Omega \paren{\vert\nabla_{r, z}u\vert^2+ V(r,z)u^2} r^3 d(r,z)\right)^\frac{1}{2}
\end{align}
is an equivalent norm to $\norm{\cdot}_{H^1_{\cyl}(r^3dr dz)}$.
Additionally, let $f$ satsify the assumptions (i)-(v). Then (\ref{scaequ}) has a ground state $u\in H^1_{\cyl}(r^3drdz)$ which is symmetric about $\{z=0\}$.
\end{Theorem}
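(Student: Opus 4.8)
The plan is to obtain the ground state as a minimizer of $J=\frac12\|\cdot\|^2-I$ over the Nehari manifold $M$, where $I(u)=\int_\Omega \frac{1}{2r^2}F(r,z,r^2u^2)r^3\,d(r,z)$, and to build the minimizing sequence out of symmetric functions so that both the symmetry of the limit and the compactness needed to pass to it are automatic. First I would check that $J$ fits the abstract framework of Szulkin--Weth \cite{SW}: conditions (i)--(ii) together with the norm equivalence \eqref{aequivnorm} give the mountain-pass geometry near $0$ and a uniform bound $\inf_{v\in M}\|v\|\geq\rho>0$; the superquadraticity (iv) makes $t\mapsto J(tu)$ tend to $-\infty$; and the strict monotonicity (iii) guarantees that for every $u\neq0$ there is a unique $t(u)>0$ with $t(u)u\in M$, which is the global maximum of $t\mapsto J(tu)$. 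Consequently $c:=\inf_M J=\inf_{u\neq0}\max_{t>0}J(tu)$ is positive, and $M$ is a natural constraint: a minimizer of $J|_M$ is a critical point of $J$, the Lagrange multiplier vanishing because $G'(u)[u]<0$ on $M$ by (iii), where $M=G^{-1}(0)$ and $G(u)=\|u\|^2-I'(u)[u]$.

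The decisive step is to show that Steiner symmetrization in $z$ does not increase $J$, so that an arbitrary minimizing sequence can be replaced by a symmetric one. For $u\in H^1_{\cyl}$ let $u^\star$ be its symmetric-decreasing rearrangement in $z$. The quadratic part decreases: a weighted Pólya--Szegő inequality (the weight $r^3$ depends only on the fixed variable $r$) gives $\int_\Omega|\nabla_{r,z}u^\star|^2r^3\leq\int_\Omega|\nabla_{r,z}u|^2r^3$, while writing $V=\esssup V-h$ with $h=\esssup V-V$ Steiner-symmetric and invoking the Hardy--Littlewood inequality yields $\int_\Omega V(u^\star)^2r^3\leq\int_\Omega Vu^2r^3$; together these give $\|u^\star\|\leq\|u\|$. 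The nonlinear part increases, $I(u^\star)\geq I(u)$, which is exactly the rearrangement inequality of Lemma~\ref{rearrang_inequ} resting on Brock's condition (v). Hence $J(tu^\star)\leq J(tu)$ for all $t>0$, so $\max_{t>0}J(tu^\star)\leq\max_{t>0}J(tu)$. Projecting back to the manifold, $v_n:=t(u_n^\star)u_n^\star\in M$ is symmetric-decreasing in $z$ and satisfies $J(v_n)\leq\max_tJ(tu_n)=J(u_n)\to c$, so $(v_n)$ is a symmetric minimizing sequence. This symmetrization is the \emph{main obstacle}: the two quadratic terms are classical, but the monotonicity $I(u^\star)\geq I(u)$ for a general, non-power nonlinearity without an Ambrosetti--Rabinowitz structure is precisely where Brock's inequality and (v) are indispensable (and it is why $V$ must be reversed Steiner-symmetric).

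Finally I would extract and identify the limit. Superquadraticity (iv) forces $(v_n)$ to be bounded in $H^1_{\cyl}$, so after passing to a subsequence $v_n\rightharpoonup v$. Here the symmetry pays off twice: the cone of cylindrically symmetric, $z$-symmetric-decreasing functions is convex and strongly closed, hence weakly closed, so $v$ inherits the symmetry; and Lions' compactness for symmetric functions \cite{Lions2,Lions} upgrades the convergence to strong convergence in $L^{p+1}(\Omega;r^3d(r,z))$, which by the subcritical growth (i) renders $I$ and $u\mapsto I'(u)[u]$ weakly sequentially continuous on the cone. The lower bound $\|v_n\|_{L^{p+1}}\geq\rho'>0$ on $M$ passes to the limit, so $v\neq0$. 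Now weak lower semicontinuity of the norm together with weak continuity of $I'(\cdot)[\cdot]$ gives $\|v\|^2\leq I'(v)[v]$, hence $t(v)\leq1$; comparing $J(t(v)v_n)\leq J(v_n)\to c$ with $\liminf_nJ(t(v)v_n)\geq J(t(v)v)$ (using $I(t(v)v_n)\to I(t(v)v)$) yields $J(t(v)v)\leq c$, while $t(v)v\in M$ forces $J(t(v)v)\geq c$. Thus $u:=t(v)v$ attains $c$, is symmetric about $\{z=0\}$, and, $M$ being a natural constraint, is a weak solution of \eqref{scaequ}.
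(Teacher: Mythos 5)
Your overall strategy coincides with the paper's: verify the Szulkin--Weth hypotheses, replace a minimizing sequence on the Nehari manifold by the projections of its Steiner symmetrizations using $\norm{u^\star}\leq\norm{u}$ (P\'olya--Szeg\H{o} plus the Hardy--Littlewood inequality for the reversed Steiner-symmetric $V$) and $I(u^\star)\geq I(u)$, $I'(u^\star)[u^\star]\geq I'(u)[u]$ (Brock's inequality via (v)), then pass to the weak limit inside the weakly closed cone $K_{4,1}$. Two of your steps are in fact cleaner than the paper's: the comparison of maxima along rays, $J(v_n)=\max_{t>0}J(tu_n^\star)\leq\max_{t>0}J(tu_n)=J(u_n)$, replaces the paper's explicit argument that $t_k\leq 1$ combined with the monotonicity of $t\mapsto t^2f(r,z,s^2)s^2-F(r,z,t^2s^2)$ leading to \eqref{min_seq}; and accepting $u:=t(v)v$ as the ground state avoids the paper's contradiction argument showing $t_\infty=1$. (Minor caveats: you should pass to $|u_n|$ before symmetrizing; the Lagrange-multiplier remark presumes $M$ is a $C^1$ manifold, which may fail since $f$ is only Carath\'{e}odory --- but the Szulkin--Weth framework you invoke is designed precisely to bypass this; and condition (iii) of Theorem 12 in \cite{SW} requires $I(su)/s^2\to\infty$ \emph{uniformly on weakly compact subsets}, which takes a short Fatou argument rather than just ``$J(tu)\to-\infty$''.)

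The genuine gap is the compactness step. You assert that ``Lions' compactness for symmetric functions upgrades the convergence to strong convergence in $L^{p+1}(\Omega;r^3d(r,z))$'', and both $v\neq 0$ and the weak sequential continuity of $I$ and $I'(\cdot)[\cdot]$ hinge on this. In the weighted, cone-restricted setting at hand this is not an off-the-shelf citable result: what \cite{Lions2,Lions} provide (and what the paper reproduces in Lemmas~\ref{radabkling} and \ref{zylabkling}) are the pointwise decay estimates $0\leq v(r,z)\leq C\,r^{-3/2}|z|^{-1/2}$ on $K_{4,1}$, cf. \eqref{abkling_5d}; converting these into the needed convergence is the paper's central technical work, Lemma~\ref{weak_continuity}, whose proof requires splitting $\Omega$ into four regions --- in particular on $\{r\geq R,\ |z|<R\}$, where neither decay in $z$ nor finiteness of measure is available, one needs the monotonicity trick \eqref{monotonie_trick} together with a $W^{1,1}\hookrightarrow L^1$ compactness argument in the $z$-variable. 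Moreover, you name the wrong target space: since the growth bound (i)--(ii) yields $f(r,z,r^2u^2)u^2r^3\leq\left(\epsilon r^2u^2+C_\epsilon|ru|^{p+1}\right)r$, the convergence actually required is $rv_n\to rv$ in $L^{p+1}(rdrdz)$, i.e. weight $r^{p+2}$ on $|v_n-v|^{p+1}$, not weight $r^3$ (these weighted spaces differ for every $p>1$); this is precisely \eqref{Inp+1cpt} in the paper. Without supplying (or correctly locating) this lemma, your weak limit $v$ could a priori vanish and none of the final identifications go through.
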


\noindent 
{\bf Remarks:} (1) The assumption of norm-equivalence is for instance satisfied if $V\geq 0$ and $\inf_{B_R^c} V>0$ for some $R>0$, where $B_R^c\coloneqq \{(r,z)\in\Omega: r^2+z^2>R^2\}$. For the reader's convenience the proof based on Poincaré's inequality is given in the Appendix. Since Poincaré's inequality is applicable for domains bounded in one direction we can weaken $\inf_{B_R^c} V>0$ to $\inf_{S^c}V>0$ for strips $S=[0,\infty) \times [0,\rho]$ with $\rho>0$ or $S=[r_0,r_1] \times [0,\infty)$ with $0\leq r_0<r_1<\infty$.

(2) The conditions on $f$ are satisfied if for instance $f(r,z,s)=\Gamma(r,z) |s|^\frac{p-2}{2}s$ where $\Gamma\in L^\infty(\Omega)$ is Steiner-symmetric, $\essinf_\Omega \Gamma>0$ and $p\in (1,5)$. This choice of $f$ corresponds to the equation $\nabla\times\nabla\times U +V(r,z) U=\Gamma(r,z)\abs{U}^{p-1}U$ in $\R^3$. Another possible choice is $f(r,z,s)=\Gamma(r,z) \log (1+s)$ where again $\Gamma\in L^\infty(\Omega)$ is Steiner-symmetric and $\essinf_\Omega \Gamma>0$. This nonlinearity appeared for instance in \cite{Liu} and it does not satisfy the classical Ambrosetti-Rabinowitz condition.
\medskip

The paper is structured as follows: In Section~\ref{2} we give details on the variational formulation of problem (\ref{scaequ}) and prove pointwise decay estimates of Steiner-symmetric functions in $H^1_{\cyl}(r^3drdz)$. In Section~\ref{3} we give the proof of Theorem~\ref{gsthm}, and in the Appendix we show an example for the potential $V$ satisfying the equivalent-norm-assumption of Theorem~\ref{gsthm}.

\section{Variational formulation, decay estimates, rearrangements} \label{2}

Let us consider some properties of the space $H^1_{\cyl}(r^3drdz)$. First, for $U$ of the form \eqref{richtigeransatz} we have that $U\in H^1(\R^3)$ if and only if $u\in H^1_{\cyl}(r^3drdz)$. A norm on $H^1_{\cyl}(r^3drdz)$ is given by
\begin{align*}
\norm{u}_{H^1_{\cyl}(r^3drdz)}\coloneqq \left(\int_\Omega \paren{\vert\nabla_{r,z} u\vert^2 + u^2}r^3 d(r,z)\right)^\frac{1}{2}.
\end{align*}
Notice that the space $H^1_{\cyl}(r^3drdz)$ behaves like a Sobolev-space in dimension $5$. Next we show a useful embedding property. For this we need the following Sobolev and Lebesgue spaces in dimension $3$ together with their canonical norms:
\begin{align*}
H^1_{\cyl}(rdrdz)\coloneqq & \brac{v\colon (0,\infty)\times\R\to\R: v, \frac{\partial v}{\partial r}, \frac{\partial v}{\partial z}\in L^2_{\cyl}(rdrdz)},\\
L^q_{\cyl}(rdrdz)\coloneqq & \brac{v\colon (0,\infty)\times\R\to\R: \int_\Omega |v(r,z)|^q rd(r,z)<\infty} \text{ for } q\in[1,\infty).
\end{align*}
\begin{Lemma} For $u\in H^1_{\cyl}(r^3drdz)$ Hardy's inequality holds
\begin{align} \label{HardyU}
\int_\Omega \frac{u^2}{r^2} r^3d(r,z) \leq C_H\int_\Omega \paren{\paren{\frac{\partial u}{\partial r}}^2+\paren{\frac{\partial u}{\partial z}}^2}r^3 d(r,z).
\end{align}
Moreover, if $u \in H^1_{\cyl}(r^3drdz)$ then $ ru \in H^1_{\cyl}(rdrdz)$ and there is a constant $C>0$ such that for $2\leq q \leq 6$
\begin{equation} \label{hilfreiche_ungl}
\norm{ru}_{H^1_{\cyl}(r dr dz)}, \|ru\|_{L^q_{\cyl}(rdrdz)} \leq C \norm{u}_{H^1_{\cyl}(r^3dr dz)}
\end{equation}
\end{Lemma}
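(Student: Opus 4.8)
The plan is to establish the three assertions successively, using each to prove the next.

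First I would prove Hardy's inequality \eqref{HardyU} for functions $u\in C_c^\infty(\overline\Omega)$ and then pass to general $u\in H^1_{\cyl}(r^3drdz)$ by density and Fatou's lemma. For smooth, compactly supported $u$ the key is a one-dimensional integration by parts in $r$ at fixed $z$: since $\frac{d}{dr}(u^2 r^2)=2u\,\partial_r u\,r^2+2u^2 r$ and the boundary terms vanish (at $r=0$ because of the factor $r^2$, at $r=\infty$ because of compact support), one gets $\int_0^\infty u^2 r\,dr=-\int_0^\infty u\,\partial_r u\,r^2\,dr$. Writing $r^2=r^{1/2}\cdot r^{3/2}$ and applying the Cauchy–Schwarz inequality yields $\int_0^\infty u^2 r\,dr\le\big(\int_0^\infty u^2 r\,dr\big)^{1/2}\big(\int_0^\infty(\partial_r u)^2 r^3\,dr\big)^{1/2}$, hence $\int_0^\infty u^2 r\,dr\le\int_0^\infty(\partial_r u)^2 r^3\,dr$. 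Integrating over $z$ and using $\frac{u^2}{r^2}r^3=u^2 r$ gives \eqref{HardyU} with $C_H=1$ on the dense subspace, and the full statement follows by approximation.

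Next, for the substitution $v=ru$ I would compute $\partial_r v=u+r\,\partial_r u$ and $\partial_z v=r\,\partial_z u$ directly. These give $\int_\Omega v^2 r\,d(r,z)=\int_\Omega u^2 r^3\,d(r,z)$ and $\int_\Omega(\partial_z v)^2 r\,d(r,z)=\int_\Omega(\partial_z u)^2 r^3\,d(r,z)$, while $(a+b)^2\le 2a^2+2b^2$ bounds $\int_\Omega(\partial_r v)^2 r\,d(r,z)\le 2\int_\Omega u^2 r\,d(r,z)+2\int_\Omega(\partial_r u)^2 r^3\,d(r,z)$. The only term not already of the desired form is $\int_\Omega u^2 r\,d(r,z)=\int_\Omega\frac{u^2}{r^2}r^3\,d(r,z)$, which is controlled by \eqref{HardyU}. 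This shows $ru\in H^1_{\cyl}(rdrdz)$ together with the first bound in \eqref{hilfreiche_ungl}. For the $L^q$-bound I would identify $H^1_{\cyl}(rdrdz)$ with the cylindrically symmetric functions in $H^1(\R^3)$: a scalar field $W(x)=w(r,z)$ satisfies $\int_{\R^3}\abs{W}^q\,dx=2\pi\int_\Omega\abs{w}^q r\,d(r,z)$ and likewise for the Dirichlet integral, so the weighted norms coincide with the genuine $H^1(\R^3)$- and $L^q(\R^3)$-norms up to the factor $2\pi$. The classical Sobolev embedding $H^1(\R^3)\hookrightarrow L^q(\R^3)$ for $2\le q\le 6$ then gives $\norm{ru}_{L^q_{\cyl}(rdrdz)}\le C\norm{ru}_{H^1_{\cyl}(rdrdz)}$, and combining this with the first bound in \eqref{hilfreiche_ungl} completes the proof.

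The step I expect to require the most care is the justification of the Hardy estimate, namely the vanishing of the boundary terms and the passage from the dense subspace to all of $H^1_{\cyl}(r^3drdz)$. In particular one must use the density of $C_c^\infty(\overline\Omega)$—whose elements may have support touching the axis $\{r=0\}$, since functions in $H^1_{\cyl}(r^3drdz)$ need not vanish there—and verify that the limit preserves the inequality. Once \eqref{HardyU} is in hand, the remaining two parts are essentially computational.
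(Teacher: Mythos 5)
Your proof is correct, and for the second and third assertions it is essentially the paper's own argument: the identity $\frac{\partial}{\partial r}(ru)=u+r\frac{\partial u}{\partial r}$ together with Hardy's inequality gives $ru\in H^1_{\cyl}(rdrdz)$ with the first bound in \eqref{hilfreiche_ungl}, and the $L^q$ bound then follows from the Sobolev embedding in three dimensions, exactly as in the paper. The genuine difference lies in the first assertion: the paper does not prove \eqref{HardyU} at all but cites it from Lemma 9\,(i) of \cite{BDPR}, whereas you give a self-contained proof via the one-dimensional integration by parts $\frac{d}{dr}\paren{u^2r^2}=2u\,\partial_r u\,r^2+2u^2r$ at fixed $z$, Cauchy--Schwarz with the splitting $r^2=r^{1/2}\cdot r^{3/2}$, and a density/Fatou passage to general $u$. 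Your argument is sound: the boundary terms do vanish for smooth compactly supported cylindrical functions (at $r=0$ because of the factor $r^2$, at infinity by compact support), the division step after Cauchy--Schwarz is legitimate because $\int_0^\infty u^2r\,dr$ is finite for such $u$, the density of smooth compactly supported cylindrical functions in $H^1_{\cyl}(r^3drdz)$ is exactly the fact recorded in the paper's remark preceding Lemma~\ref{rearrang_inequ} (via the identification with radial functions in $C_c^\infty(\R^5)$), and Fatou's lemma preserves the inequality along an a.e.\ convergent subsequence. What your route buys: the lemma becomes self-contained, you get the explicit (in fact sharp, four-dimensional) constant $C_H=1$, and your proof yields the slightly stronger statement that only $\paren{\frac{\partial u}{\partial r}}^2$ is needed on the right-hand side of \eqref{HardyU}. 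What the paper's route buys: brevity, since the same weighted Hardy inequality is already established in \cite{BDPR} in the identical functional setting.
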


\begin{proof}
Hardy's inequality \eqref{HardyU} is given in Lemma 9 (i) in \cite{BDPR}. For $u\in H^1_{\cyl}(r^3drdz)$ we have $ru$, $\frac{\partial}{\partial z}\paren{ru}$, $r\frac{\partial u}{\partial r} \in L^2_{\cyl}(rdr dz)$ and by \eqref{HardyU} also $u\in L^2_{\cyl}(r dr dz)$. Since $\frac{\partial}{\partial r}\paren{ru}=r\frac{\partial u}{\partial r}+u$ we conclude altogether $ru\in H^1_{\cyl}(rdrdz)$. By the Sobolev embedding in three dimensions this implies $ru\in L^q(r dr dz)$ for $q\in [2,6]$ and (\ref{HardyU}) yields
\begin{align} \label{H^1(r)gegenH^1(r^3)}
\begin{split}
\norm{ru}^2_{H^1_{\cyl}(r dr dz)} &= \int_\Omega \paren{\vert\nabla_{r,z}(ru)\vert^2 + r^2 u^2} rd(r,z) \\
&\leq 2 \int_\Omega \paren{\paren{r\frac{\partial u}{\partial z}}^2+ \paren{r\frac{\partial u}{\partial r}}^2 + u^2 + r^2u^2} r d(r,z)\leq \tilde{C}\norm{u}^2_{H^1_{\cyl}(r^3 dr dz)}.
\end{split}
\end{align}
\end{proof}

Next we show that the functional $J$ from the introduction as well as the functional in the defintion of the Nehari-manifold are well-defined. 

\begin{Lemma} \label{Lemmawohlgestellt}
There is a constant $C>0$ such that
\begin{align} \label{Einbettung}
\int_\Omega f(r,z,r^2 u^2)u^2 r^3\,drdz, \int_{\Omega} \frac{1}{2r^2} F(r,z,r^2u^2) r^3 d(r,z) \leq C \paren{\norm{u}_{H^1_{\cyl}(r^3dr dz)}^2+\norm{u}_{H^1_{\cyl}(r^3dr dz)}^{p+1}} 
\end{align}
for all $u\in H^1_{\cyl}(r^3dr dz)$.
\end{Lemma}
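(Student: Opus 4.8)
The plan is to control both integrals directly via the growth bound on $f$ in assumption (i) together with the embedding inequality \eqref{hilfreiche_ungl}. The key algebraic observation is that the weight $r^3$ combines with the powers of $r$ coming from the argument $r^2u^2$ in such a way that everything can be re-expressed in terms of $ru$ and the three-dimensional measure $r\,d(r,z)$, which is exactly the object controlled by \eqref{hilfreiche_ungl}.

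First I would treat the integral $\int_\Omega f(r,z,r^2u^2)u^2 r^3\,d(r,z)$. Inserting the bound $f(r,z,s)\le c(1+s^{(p-1)/2})$ from (i) with $s=r^2u^2$ gives $f(r,z,r^2u^2)\le c(1+r^{p-1}\abs{u}^{p-1})$, so the integrand is dominated by $c\,u^2 r^3 + c\,r^{p+2}\abs{u}^{p+1}$. The first summand integrates to $c\int_\Omega u^2 r^3\,d(r,z)\le c\,\norm{u}_{H^1_{\cyl}(r^3drdz)}^2$. For the second summand I would use the identity $r^{p+2}\abs{u}^{p+1}=r\,\abs{ru}^{p+1}$, so that $\int_\Omega r^{p+2}\abs{u}^{p+1}\,d(r,z)=\norm{ru}_{L^{p+1}_{\cyl}(rdrdz)}^{p+1}$. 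Since $p\in(1,5)$ we have $p+1\in(2,6)\subset[2,6]$, so \eqref{hilfreiche_ungl} applies and bounds this term by $C^{p+1}\norm{u}_{H^1_{\cyl}(r^3drdz)}^{p+1}$.

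For the second integral I would first estimate $F$ by integrating the bound on $f$, namely $F(r,z,t)=\int_0^t f(r,z,s)\,ds\le c\paren{t+\tfrac{2}{p+1}t^{(p+1)/2}}$. Substituting $t=r^2u^2$ and noting that the total weight factor is $\tfrac{1}{2r^2}r^3=\tfrac{r}{2}$ shows that the integrand $\tfrac{1}{2r^2}F(r,z,r^2u^2)r^3$ is dominated by a constant times $u^2 r^3 + r^{p+2}\abs{u}^{p+1}$; the factor $\tfrac{1}{2r^2}$ precisely cancels the extra powers of $r$ carried by $t$ and $t^{(p+1)/2}$. These are exactly the two terms already handled above, so the same two estimates close the argument.

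I do not expect a serious obstacle here, as the lemma is a well-posedness check rather than a deep statement. The only point requiring care is the bookkeeping of the $r$-powers, so that the combination $ru$ together with the measure $r\,d(r,z)$ appears cleanly, and the verification that the exponent $p+1$ lands in the admissible range $[2,6]$ of the embedding \eqref{hilfreiche_ungl}. Taking $C$ to be a suitable multiple of the embedding constant from \eqref{hilfreiche_ungl} and the growth constant $c$ from (i) then yields the claim.
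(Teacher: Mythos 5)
Your proposal is correct and follows essentially the same route as the paper: bound $f(r,z,s)$ by a constant plus $s^{(p-1)/2}$, substitute $s=r^2u^2$ so that the integrands are dominated by $u^2r^3$ plus $\abs{ru}^{p+1}r$, and apply the embedding \eqref{hilfreiche_ungl} with exponents $2$ and $p+1\in[2,6]$. The only (immaterial) difference is that the paper combines assumptions (i) and (ii) to write the bound as $\epsilon + C_\epsilon s^{(p-1)/2}$ — a form it reuses later for the weak-continuity lemma — whereas you use only (i), which suffices for this statement.
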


\begin{proof}
Clearly assumption (i) and (ii) show that for every $\epsilon>0$ there is $C_\epsilon>0$ such that 
\begin{align*}
0\leq f(r,z,s) \leq \epsilon +C_\epsilon s^\frac{p-1}{2}.
\end{align*} 
Hence 
\begin{align} 
0\leq f(r,z,r^2 u^2)u^2r^3 & \leq \left(\epsilon r^2u^2 + C_\epsilon \vert ru\vert^{p+1})\right) r, \label{estimate_f}\\
0\leq \frac{1}{2r^2}F(r,z,r^2u^2)r^3  & \leq \left(\epsilon r^2u^2 + \tilde C_\epsilon \vert ru\vert^{p+1}\right)r. \label{estimate_F}
\end{align}
Due to \eqref{hilfreiche_ungl} this implies the claim. 
\end{proof}

In order to find critical points of $J$ we need uniform decay estimates of Steiner-symmetric functions in $H_{\cyl}^1(r^3drdz)$. These estimates are given in \cite{Lions} in much more generality but for the sake of completeness we give them here together with the simple proof. We start with a well-known fact concerning radially symmetric functions and afterwards extend the result to cylindrically symmetric functions. Let 
\begin{align*}
H_{\rad}^1 (\R^n)\coloneqq \brac{u\in H^1(\R^n): u \text{ is radially symmetric}}.
\end{align*} 

\begin{Lemma} (see \cite{Lions}) \label{radabkling}
Let $n\geq 2$. Then there is a constant $C>0$ such that
\begin{align*}
\abs{u(x)}\leq C \norm{\nabla u}_{L^2(\R^n)}^{1/2}\norm{u}_{L^2(\R^n)}^{1/2} \abs{x}^{-(n-1)/2} \text{ for almost all } x\in\R^n \text{ and all } u\in H_{\rad}^1(\R^n).
\end{align*}
\end{Lemma}
\begin{proof}
By density it is sufficient to prove the estimate for $u\in H_{\rad}^1(\R^n)\cap C_c^\infty (\R^n)$. Let $r\coloneqq \abs{x}$. Then
\begin{align*}
\frac{d}{dr}\paren{r^{n-1}\abs{u}^2} = (n-1) r^{n-2}\abs{u}^2 + r^{n-1} 2 u \frac{\partial u}{\partial r} \geq -2 \abs{u} \abs{\frac{\partial u}{\partial r}} r^{n-1}.
\end{align*}
Integrating from $r$ to $\infty$ and expanding the domain of integration to all of $\R^n$ yields
\begin{align*}
r^{n-1} \abs{u(x)}^2 \leq C\int_{\R^n} \abs{u}\abs{\nabla u} dy \leq C \norm{\nabla u}_{L^2(\R^n)} \norm{u}_{L^2(\R^n)}. \tag*{\qedhere} 
\end{align*}
\end{proof}

Now we give an extension of Lemma~\ref{radabkling} to cylindrically symmetric functions which are Steiner-symmetric in the non-radial component. We make use of the following notation: Let $t\in \N_{\geq 2}$ and $s\in \N$ such that $n=t+s$. We write points in $\R^n$ as $(x,y)$ with $x\in\R^t$ and $y=\paren{y_1,\dots, y_s}\in \R^s$. Furthermore, let 
\begin{align*}
K_{t,s}\coloneqq \brac{u\in H^1(\R^n) \text{\ s.t. } \begin{cases} u(\cdot, y) &\text{ is a radially symmetric function for every } y\in\R^s \text{ and } \\ u(x,\cdot) &\text{ is Steiner-symmetric w.r.t. } y_i ,i=1,\dots,s, \text{ for every } x\in\R^t \end{cases}}.
\end{align*}
In particular, if $u\in K_{t,s}$ then necessarily $u\geq 0$. In this setting we have the following extension of Lemma \ref{radabkling}.

\begin{Lemma} (see \cite{Lions}) \label{zylabkling}
There is a constant $C>0$ such that
\begin{align*}
0\leq u(x,y)\leq C \norm{\nabla_x u}^{1/2}_{L^2 (\R^n)}\norm{u}^{1/2}_{L^2 (\R^n)} \abs{x}^{-(t-1)/2} \abs{y_1\cdots y_s}^{-1/2} \text{ for almost all } (x,y)\in\R^n \text{ and all } u\in K_{t,s}.
\end{align*}
\end{Lemma}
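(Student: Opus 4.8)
The plan is to combine the monotonicity coming from the Steiner symmetry in the $y$-variables with the radial differential inequality already used in the proof of Lemma~\ref{radabkling} for the $x$-variables. Since $u\in K_{t,s}$ forces $u\geq 0$, the lower bound is immediate and only the upper bound needs proof, and by evenness of $u$ in each $y_i$ we may assume $y_i>0$ for all $i$. I would first integrate out the $y$-variables using monotonicity, reducing everything to a quantity that is radially symmetric in $x$, and only afterwards apply the one-dimensional radial estimate in $\abs{x}$. This ordering is what avoids ever differentiating $u$ in the $y$-directions, which is essential because the right-hand side of the asserted inequality contains only $\nabla_x u$.

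\emph{Step 1 (monotonicity in $y$).} Being Steiner-symmetric in $y_i$ means that $u$ is even and nonincreasing in $y_i\ge 0$ for fixed remaining variables. Hence for $y_i>0$,
\[
y_i\,u(x,y)^2=\int_0^{y_i}u(x,y)^2\,d\eta_i\le \int_0^{y_i}u(x,\dots,\eta_i,\dots)^2\,d\eta_i\le\int_0^{\infty}u(x,\dots,\eta_i,\dots)^2\,d\eta_i .
\]
The function obtained after integrating in $\eta_i$ is again nonincreasing in each remaining $y_j$, so the estimate may be iterated over $i=1,\dots,s$. Using evenness to replace each $\int_0^\infty$ by $\tfrac12\int_\R$, this yields
\[
y_1\cdots y_s\,u(x,y)^2\le 2^{-s}\int_{\R^s}u(x,\eta)^2\,d\eta=2^{-s}\,\Psi(x),\qquad \Psi(x):=\norm{u(x,\cdot)}_{L^2(\R^s)}^2 .
\]
This step is purely measure-theoretic (monotonicity plus Fubini) and requires no regularity of $u$.

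\emph{Step 2 (radial estimate in $x$).} The function $\Psi$ is radially symmetric in $x$, and repeating the computation of Lemma~\ref{radabkling} with $\rho=\abs{x}$ gives $\frac{d}{d\rho}\paren{\rho^{t-1}\Psi}\ge -\rho^{t-1}\abs{\Psi'}$, where $\abs{\Psi'(\rho)}\le 2\int_{\R^s}u\,\abs{\partial_\rho u}\,d\eta=2\int_{\R^s}u\,\abs{\nabla_x u}\,d\eta$. Integrating from $\rho=\abs{x}$ to $\infty$, discarding the vanishing boundary term at infinity, expanding the domain of integration to all of $\R^t$ and using Cauchy--Schwarz leads to
\[
\abs{x}^{t-1}\Psi(x)\le C\int_{\R^n}u\,\abs{\nabla_x u}\,d(x,y)\le C\,\norm{u}_{L^2(\R^n)}\norm{\nabla_x u}_{L^2(\R^n)} .
\]
To make this rigorous I would first prove it for smooth functions that are radial in $x$ (obtained by mollifying $u$ with a kernel that is radial in the $x$-variables, which preserves the $x$-radial symmetry and converges in $H^1$) and then pass to the limit along a subsequence; note that here only the $x$-symmetry, not the $y$-monotonicity, is needed.

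Combining the two conclusions gives, for almost every $(x,y)$,
\[
y_1\cdots y_s\,u(x,y)^2\le 2^{-s}\abs{x}^{-(t-1)}\,C\,\norm{u}_{L^2(\R^n)}\norm{\nabla_x u}_{L^2(\R^n)},
\]
and taking square roots yields the claimed bound. The main technical obstacle is Step 2: justifying the radial differential inequality for the integrated quantity $\Psi$, that is, its local absolute continuity in $\rho$ and the vanishing of $\rho^{t-1}\Psi(\rho)$ as $\rho\to\infty$ (which follows from $u\in L^2(\R^n)$ together with the monotone structure, exactly as in Lemma~\ref{radabkling}), together with the accompanying smooth approximation. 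The monotonicity iteration of Step 1 is elementary once the symmetric-decreasing representative of $u$ is fixed.
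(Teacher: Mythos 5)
Your proof is correct, but it assembles the two ingredients (monotonicity in the $y$-variables and the one-dimensional radial trick in $\abs{x}$) in the opposite order from the paper, and around a different auxiliary function. The paper fixes $y$ with $y_i>0$ and sets $v(x):=\int_0^{y_1}\cdots\int_0^{y_s}u(x,z)\,dz$; by H\"older's inequality, $\norm{v}_{L^2(\R^t)}\le (y_1\cdots y_s)^{1/2}\norm{u}_{L^2(\R^n)}$ and $\norm{\nabla v}_{L^2(\R^t)}\le (y_1\cdots y_s)^{1/2}\norm{\nabla_x u}_{L^2(\R^n)}$, so Lemma~\ref{radabkling} applies to $v$ as a black box, and monotonicity is used only at the very end, in the form $v(x)\ge y_1\cdots y_s\,u(x,y)$. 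This ordering means the paper never re-derives the differential inequality nor worries about approximation: all regularity issues are absorbed into the already-proved Lemma~\ref{radabkling}. You instead use monotonicity first, producing the $y$-independent radial quantity $\Psi(x)=\norm{u(x,\cdot)}_{L^2(\R^s)}^2$, and then must redo the ODE argument for $\Psi$ by hand --- which is exactly the technical burden (mollification radial in $x$, absolute continuity in $\rho$, the boundary term at infinity) that you correctly identify as the main obstacle; your sketch of how to handle it is sound, since $\int_0^\infty \rho^{t-1}\Psi(\rho)\,d\rho<\infty$ gives a sequence $\rho_j\to\infty$ along which $\rho_j^{t-1}\Psi(\rho_j)\to 0$. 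Your route does buy something: the $y$-dependence is isolated in one elementary measure-theoretic step, and the radial object $\Psi$ does not depend on the point $y$ at which the bound is evaluated. Moreover, your Step 2 can be made as clean as the paper's argument: the function $w:=\sqrt{\Psi}$ is radial, satisfies $\norm{w}_{L^2(\R^t)}=\norm{u}_{L^2(\R^n)}$, and by Cauchy--Schwarz $\abs{\nabla w}\le \paren{\int_{\R^s}\abs{\nabla_x u}^2\,d\eta}^{1/2}$ wherever $\Psi>0$, so $w\in H^1_{\rad}(\R^t)$ with $\norm{\nabla w}_{L^2(\R^t)}\le\norm{\nabla_x u}_{L^2(\R^n)}$; applying Lemma~\ref{radabkling} directly to $w$ then replaces your entire Step 2 with no mollification needed, and combined with your Step 1 yields the claim (with the harmless extra factor $2^{-s/2}$).
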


\begin{proof}
Let $u\in K_{t,s}$ and fix $y\in \R^s$. W.l.o.g. let $y_i>0$ for all $i=1,\dots,s$. We define
\begin{align*}
v(x)\coloneqq \int_0^{y_1} \cdots \int_0^{y_s} u(x,z) dz \text{ for } x\in\R^t.
\end{align*}
By Hölder's inequality we obtain $v^2(x)\leq y_1 \cdots y_s \int_0^{y_1} \cdots \int_0^{y_s} u^2(x,z) dz$, i.e.,
\begin{align} \label{lalpha}
\norm{v}_{L^2 (\R^t)}&\leq (y_1 \cdots y_s)^{1/2} \norm{u}_{L^2 (\R^n)}.
\end{align}
In the same manner we receive
\begin{align} \label{lbeta}
\norm{\nabla v}_{L^2 (\R^t)} \leq (y_1\cdots y_s)^{1/2} \norm{\nabla_x u}_{L^2 (\R^n)}.
\end{align}
Since $v\colon \R^t\to\R$ is radially symmetric we can apply Lemma \ref{radabkling} and get from (\ref{lalpha}) and (\ref{lbeta})
\begin{align} \label{zylabk2}
0\leq v(x) &\leq C \norm{\nabla v}_{L^2 (\R^t)}^{1/2} \norm{v}^{1/2}_{L^2 (\R^t)} \abs{x}^{-(t-1)/2} \leq C (y_1\cdots y_s)^{1/2} \norm{\nabla_x u}^{1/2}_{L^2 (\R^n)} \norm{u}^{1/2}_{L^2 (\R^n)} \abs{x}^{-(t-1)/2}. 
\end{align}
Due to the monotonicity-property in $y$-direction we also have $v(x)\geq y_1\cdots y_s u(x,y)$ and thus (\ref{zylabk2}) gives the desired inequality.
\end{proof}

We prove three additional lemmas which are used in the next section.
\begin{Lemma} \label{Kabg}
The set $K_{t,s}$ is a weakly closed cone in $H^1(\R^n)$.
\end{Lemma}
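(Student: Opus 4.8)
The plan is to show that $K_{t,s}$ is a convex cone which is closed in the norm topology of $H^1(\R^n)$, and then to invoke Mazur's lemma: in a Hilbert space a convex, norm-closed set is weakly closed. This reduces the weak statement to two elementary verifications plus a standard separation argument.

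First I would verify the algebraic structure. Each of the defining requirements on $u\in K_{t,s}$, namely the radial symmetry $u(R\,\cdot\,,y)=u(\,\cdot\,,y)$ for every $R\in O(t)$, the evenness $u(x,\dots,y_i,\dots)=u(x,\dots,-y_i,\dots)$, the monotone decrease of $y_i\mapsto u(x,y)$ in $\abs{y_i}$ (remaining coordinates fixed), and the induced nonnegativity $u\geq 0$, is preserved under multiplication by any $\lambda\geq 0$, and $0\in K_{t,s}$ trivially; hence $K_{t,s}$ is a cone. For convexity I note that the radiality and evenness conditions are \emph{linear} constraints, stable under arbitrary linear combinations, while the pointwise monotonicity in $\abs{y_i}$ and the nonnegativity are stable under \emph{nonnegative} linear combinations. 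Consequently $\lambda u+(1-\lambda)v\in K_{t,s}$ for $u,v\in K_{t,s}$ and $\lambda\in[0,1]$, so $K_{t,s}$ is convex.

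The substantive step is norm-closedness. Given $u_k\in K_{t,s}$ with $u_k\to u$ in $H^1(\R^n)$, I would pass to a subsequence (not relabeled) converging to $u$ pointwise almost everywhere. Since every defining property is an almost-everywhere pointwise condition, it is inherited by the a.e.-limit: nonnegativity and the two equalities defining radiality and evenness pass to the limit at once. For the monotonicity I would use Fubini to fix the complementary variables and reduce to the one-dimensional fact that an a.e.-limit of functions that are symmetric and nonincreasing in $\abs{y_i}$ is again symmetric and nonincreasing in $\abs{y_i}$. Combining the countably many null sets produced in this reduction, one concludes $u\in K_{t,s}$, so $K_{t,s}$ is closed in $H^1(\R^n)$. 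Finally, because $H^1(\R^n)$ is a Hilbert space and $K_{t,s}$ is convex and norm-closed, Mazur's lemma gives that $K_{t,s}$ is weakly closed.

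The part I expect to need the most care is the passage of the monotonicity constraint to the a.e.-limit: the Fubini reduction and the bookkeeping of exceptional null sets must be arranged so that, after discarding a single set of measure zero, the one-variable monotone-symmetric structure survives simultaneously for almost every choice of the remaining coordinates.
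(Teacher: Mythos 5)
Your proof is correct, but it takes a different route from the paper. The paper argues directly on a weakly convergent sequence $u_k\rightharpoonup u$: it uses the compact Sobolev embedding (Rellich) on bounded subdomains to extract a subsequence converging pointwise a.e., and then passes the pointwise defining properties (radiality in $x$, Steiner structure in $y$) to the a.e.\ limit --- exactly the inheritance step you also need for norm-closedness. You instead observe that $K_{t,s}$ is convex (nonnegative combinations preserve the symmetric-nonincreasing slice structure, while radiality and evenness are linear constraints), prove norm-closedness via the elementary fact that $L^2$-convergence yields an a.e.\ convergent subsequence, and then invoke Mazur's theorem to upgrade norm-closed plus convex to weakly closed. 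What your route buys: it avoids compact embeddings altogether, it records the additional structural fact that $K_{t,s}$ is convex (not stated in the paper, and potentially useful elsewhere), and Mazur gives genuine weak closedness in the topological sense, whereas the paper's sequential argument gives weak sequential closedness (sufficient for its use in Lemma~\ref{weak_continuity} and Theorem~\ref{gsthm}, and equivalent here by convexity). What the paper's route buys: it is shorter, needs no convexity, and would survive even if the cone were not convex. Your flagged concern --- the Fubini bookkeeping when passing the one-dimensional monotone-symmetric structure to the a.e.\ limit --- is indeed the only delicate point, and it is common to both proofs: one fixes a single null set outside of which all $u_k$ and the limit are defined, uses Fubini to get, for a.e.\ choice of the complementary variables, co-null one-dimensional slices on which the monotonicity and symmetry relations hold for every $k$ simultaneously (a countable union of null sets), and then passes the inequalities and equalities to the limit on that co-null slice. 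This is sound as you describe it.
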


\begin{proof}
Take a sequence $\paren{u_k}_{k\in\N}\subset K_{t,s}$ such that $u_k \rightharpoonup u\in H^1(\R^n)$ as $k\to\infty$. By the Sobolev embedding on bounded domains we deduce that a subsequence of $u_k$ converges pointwise almost everywhere on $\R^n$ to $u$. Since every $u_k$ enjoys the radial symmetry in the first component and the non-increasing property in the second variable, the pointwise convergence implies that also $u$ enjoys these properties, i.e., $u\in K_{t,s}$.   
\end{proof}

\begin{Lemma}\label{weak_continuity}
The functionals 
$$
I(v)= \int_\Omega \frac{1}{2r^2} F(r,z,r^2 v^2)r^3\,d(r,z), \qquad I'(v)[v] = \int_\Omega f(r,z,r^2v^2)v^2 r^3\,d(r,z)
$$
are weakly sequentially continuous on the set $K_{4,1}\subset H^1_{\cyl}(r^3drdz)$.
\end{Lemma}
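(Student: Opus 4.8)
The plan is to eliminate the weights by passing to the auxiliary function $w=rv$, for which \eqref{estimate_f} and \eqref{estimate_F} show that the two functionals become
\begin{align*}
I(v)=\tfrac12\int_\Omega F(r,z,w^2)\,r\,d(r,z),\qquad I'(v)[v]=\int_\Omega f(r,z,w^2)\,w^2\,r\,d(r,z),
\end{align*}
and then to exploit the compactness of the symmetric class $K_{2,1}$ in the three-dimensional space $H^1_{\cyl}(r\,dr\,dz)$. So I would start from a sequence $v_k\rightharpoonup v$ in $H^1_{\cyl}(r^3drdz)$ with $(v_k)\subset K_{4,1}$. Weakly convergent sequences are bounded, say $\sup_k\norm{v_k}_{H^1_{\cyl}(r^3drdz)}=:M<\infty$, and $v\in K_{4,1}$ by Lemma~\ref{Kabg}. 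Since $v\mapsto rv$ is a bounded linear map from $H^1_{\cyl}(r^3drdz)$ into $H^1_{\cyl}(rdrdz)$ by \eqref{hilfreiche_ungl}, the images $w_k:=rv_k$ satisfy $w_k\rightharpoonup w:=rv$ in $H^1_{\cyl}(rdrdz)$ with uniformly bounded norm; regarded as functions on $\R^3$ (radial in the first two variables, Steiner-symmetric in $z$, nonnegative) the $w_k$ and $w$ lie in $K_{2,1}$.

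The hard part is to upgrade this weak convergence to strong convergence in the nonlinear terms, and this is where I would invest the main effort. In dimension three the relevant exponent lies in the subcritical range $2<p+1<6$, so the uniform decay estimate of Lemma~\ref{zylabkling} (applied with $t=2$, $s=1$) furnishes, via the compactness of symmetric functions of Lions \cite{Lions,Lions2}, the compact embedding $K_{2,1}\hookrightarrow L^{p+1}(\R^3)$. Hence $w_k\to w$ strongly in $L^{p+1}_{\cyl}(rdrdz)$, and along a subsequence $w_k\to w$ almost everywhere on $\Omega$. I expect this to be the genuine obstacle: without the radial symmetry in the first variables and the Steiner symmetry in $z$ the sequence could lose $L^{p+1}$-mass to infinity (as $r\to\infty$ or as $\abs{z}\to\infty$), and the strong convergence would break down. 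It is precisely the symmetry built into $K_{4,1}$, transported to $K_{2,1}$ and made quantitative through the pointwise decay of Lemma~\ref{zylabkling}, that rules this out.

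Finally I would pass to the limit by an $\epsilon$-splitting argument. Assumptions (i)--(ii) give, for every $\epsilon>0$, a constant $C_\epsilon$ with $f(r,z,s)\le\epsilon+C_\epsilon s^{(p-1)/2}$, so that, writing $d\mu=r\,d(r,z)$,
\begin{align*}
0\le f(r,z,w_k^2)\,w_k^2\le \epsilon\,w_k^2+C_\epsilon\,\abs{w_k}^{p+1}.
\end{align*}
For fixed $\rho>0$ I split $\Omega=B_\rho\cup B_\rho^c$. On the bounded set $B_\rho$ the measure $\mu$ is finite, so $w_k\to w$ in $L^2(B_\rho,d\mu)$ as well; the integrands converge a.e. and the majorants $\epsilon w_k^2+C_\epsilon\abs{w_k}^{p+1}$ converge in $L^1(B_\rho,d\mu)$, whence the generalized dominated convergence theorem yields convergence of the integrals over $B_\rho$. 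On $B_\rho^c$ the contribution is at most $\epsilon M'^2+C_\epsilon\int_{B_\rho^c}\abs{w_k}^{p+1}d\mu$, where $M'=\sup_k\norm{w_k}_{L^2(d\mu)}$, and the strong $L^{p+1}$-convergence gives $\limsup_k\int_{B_\rho^c}\abs{w_k}^{p+1}d\mu\le\int_{B_\rho^c}\abs{w}^{p+1}d\mu\to0$ as $\rho\to\infty$. Letting $k\to\infty$, then $\rho\to\infty$, then $\epsilon\to0$ shows $I'(v_k)[v_k]\to I'(v)[v]$; the identical reasoning with \eqref{estimate_F} gives $I(v_k)\to I(v)$. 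Since every subsequence admits a further subsequence along which these limits hold with the same value, the convergence holds for the full sequence, which is the asserted weak sequential continuity on $K_{4,1}$.
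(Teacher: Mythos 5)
Your reduction to $w_k=rv_k$ is sound: by \eqref{hilfreiche_ungl} the map $v\mapsto rv$ is bounded linear from $H^1_{\cyl}(r^3drdz)$ to $H^1_{\cyl}(rdrdz)$, hence weakly continuous, and $w_k$ is radial in the first two variables and Steiner-symmetric in $z$, so indeed $w_k\in K_{2,1}$ (the change of variables in the integrals is plain algebra, not a consequence of \eqref{estimate_f}--\eqref{estimate_F}, but that is a cosmetic slip). Your closing $\epsilon$-splitting is also fine and is essentially the paper's own $a_k$/$b_k$ argument. The problem is the sentence in the middle: ``the uniform decay estimate of Lemma~\ref{zylabkling} furnishes, via the compactness of symmetric functions of Lions, the compact embedding $K_{2,1}\hookrightarrow L^{p+1}(\R^3)$.'' That claimed embedding is not a black box you may invoke; translated back to cylindrical coordinates it \emph{is} the statement \eqref{Inp+1cpt}, i.e.\ exactly the step that occupies almost all of the paper's proof. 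You have restated the hard part of the lemma, not proved it.

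Concretely, the pointwise decay does not imply this compactness. Lemma~\ref{zylabkling} gives $w_k(x,z)\le C\abs{x}^{-1/2}\abs{z}^{-1/2}$, which yields uniform smallness of $\int \abs{w_k}^{p+1}$ only where \emph{both} $\abs{x}$ and $\abs{z}$ are large (the paper's region $\Omega_2$). On the slab $\Omega_4=\{r\ge R,\ \abs{z}<R\}$ the bound degenerates as $z\to 0$ and gives no control of $\int_{\Omega_4}\abs{w_k}^{p+1}\,r\,d(r,z)$ uniformly in $k$; similarly on $\Omega_3=\{r<R,\ \abs{z}\ge R\}$ the factor $\abs{x}^{-1/2}$ is useless near $r=0$. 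This is precisely why the paper, on $\Omega_4$, uses the Steiner monotonicity in $z$ directly and quantitatively (the averaging trick \eqref{monotonie_trick}), a $W^{1,1}([0,R))$-bound on the functions $\varphi_k$ combined with the compact embedding $W^{1,1}([0,R))\hookrightarrow L^1([0,R))$, and a Hölder interpolation to cover $p\in(3,5)$; and, on $\Omega_3$, an Egorov-type argument on a set of finite measure. None of this follows from the decay estimate alone. Nor can the compactness simply be cited: Lions' compactness theorems for cylindrical symmetry concern symmetry blocks of dimension at least two, and no off-the-shelf theorem covering the cone $K_{2,1}$ (with its one-dimensional Steiner direction) is available -- which is why the authors prove the convergence by hand, ``inspired by'' \cite{Lions2,Lions} rather than quoting them. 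The embedding you want is in fact true -- it is what the paper establishes -- but your proposal assumes it at the decisive point. If you replace that one sentence by an actual proof of the strong $L^{p+1}$-convergence of $w_k$ (for instance via the paper's four-region decomposition), the rest of your argument goes through.
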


\noindent
{\bf Remark:} In the proof we use twice the following principle: if $S\subset \R^m$ is a set of finite measure and $w_k: S\to\R$ a sequence of measurable functions such that $\|w_k\|_{L^r(S)}\leq C$ and $w_k\to w$ pointwise a.e. as $k\to \infty$ then $\|w_k-w\|_{L^q(S)} \to 0$ as $k\to \infty$ for $1\leq q<r$. The proof is as follows: Egorov's theorem allows to choose $\Sigma\subset S$ such that $w_k\to w$ uniformly on $\Sigma$ and $|S\setminus\Sigma|\leq \epsilon$ arbitrary small. By H\"older's inequality the remaining integral is estimated by $\int_{S\setminus\Sigma} |w_k-w|^q\,dx \leq \epsilon^{1-\frac{q}{r}} \|w_k-w\|_{L^r(S)}^q$.

\begin{proof} Let us take a weakly convergent sequence $(v_k)_{k\in \N}$ in $K_{4,1}$ such that $v_k \rightharpoonup v$ in $H^1_{\cyl}(r^3drdz)$ and $v_k\to v$ pointwise a.e. in $\Omega$. By Lemma~\ref{Kabg} one gets $v\in K_{4,1}$ and using Lemma~\ref{zylabkling} there exists a constant $C>0$ such that 
\begin{equation}
\label{abkling_5d}
0 \leq v_k(r,z), v(r,z) \leq C r^{-\frac{3}{2}} |z|^{-\frac{1}{2}} \mbox{ for all } k\in \N \mbox{ and almost all } (r,z) \in \Omega.
\end{equation}
Our goal is now to show at least for a subsequence
\begin{equation} \label{zielvvinfty}
\int_\Omega \frac{1}{r^2}F(r,z,r^2v_k^2) r^3d(r,z) \to \int_\Omega \frac{1}{r^2}F(r,z,r^2v^2) r^3d(r,z) \text{ as } k\to\infty 
\end{equation}
and 
\begin{equation}\label{zielvvinfty_2}
\int_\Omega f(r,z,r^2v_k^2)v_k^2 r^3d(r,z) \to \int_\Omega f(r,z,r^2v^2)v^2 r^3d(r,z) \text{ as } k\to\infty.
\end{equation}
By \eqref{estimate_F} we find 
\begin{align*}
\frac{1}{r^2}\left|F(r,z,r^2 v_k^2)-F(r,z,r^2 v^2) \right|r^3 \leq \epsilon r^2(v_k^2+v^2) r + C_\epsilon \paren{\vert rv_k\vert^{p+1}+\vert rv\vert^{p+1}} r
\end{align*}
and hence 
\begin{equation}
\label{key_estimate_2}
 \left(|F(r,z,r^2 v_k^2)-F(r,z,r^2 v^2)|- \epsilon r^2(v_k^2+v^2)\right)^+ r \leq  C_\epsilon \paren{\vert rv_k\vert^{p+1}+\vert rv\vert^{p+1}} r.
\end{equation}
Inspired by \cite{Lions2} and \cite{Lions} the idea is to show 
\begin{align} \label{Inp+1cpt}
rv_k \to rv \text{ in } L^{p+1}(rdrdz) \text{ as } k\to\infty.
\end{align}
Once \eqref{Inp+1cpt} is established we obtain a majorant  $|rv_k|, |rv|\leq w\in L^{p+1}(r\,drdz)$ (cf. Lemma A.1 in \cite{Willem}). Together with \eqref{key_estimate_2} this majorant allows to apply Lebesgue's dominated convergence theorem and yields 
\begin{equation} \label{new_estimate}
\lim_{k\to \infty} \int_\Omega \left(|F(r,z,r^2 v_k^2)-F(r,z,r^2 v^2)|- \epsilon r^2(v_k^2+v^2)\right)^+ r\,drdz = 2\epsilon \|v\|^2_{L^2(r^3drdz)}. 
\end{equation}
If we set
$$
a_k := \int_\Omega |F(r,z,r^2 v_k^2)-F(r,z,r^2 v^2)| r\,drdz 
$$
and
$$
b_k := \epsilon\|r^2(v_k^2+v^2)\|_{L^1(rdrdz)} =\epsilon(\|v_k\|_{L^2(r^3drdz)}^2+ \|v\|_{L^2(r^3drdz)}^2) \leq C\epsilon
$$
then 
\begin{align*}
\limsup_{k\in\N} a_k & \leq \limsup_{k\in \N} b_k + \limsup_{k\in \N} (a_k-b_k)^+ \\
& \leq C\epsilon + \limsup_{k\in \N} \left( \int_\Omega  \left(|F(r,z,r^2 v_k^2)-F(r,z,r^2 v^2)|- \epsilon r^2(v_k^2+v^2)\right)rdrdz\right)^+ \\
& \leq C\epsilon + \limsup_{k\in \N} \int_\Omega \left(|F(r,z,r^2 v_k^2)-F(r,z,r^2 v^2)|- \epsilon r^2(v_k^2+v^2)\right)^+rdrdz \\
& \leq \epsilon(C+2\|v\|^2_{L^2(r^3drdz)}) \mbox{ by \eqref{new_estimate}}.
\end{align*}
Since $\epsilon>0$ was arbitrary this shows that $\lim_{k\to\infty} a_k=0$ and therefore \eqref{zielvvinfty} holds. The proof of \eqref{zielvvinfty_2} is similar since $\left(f(r,z,r^2v_k^2)r^2v_k^2-f(r,z,r^2 v^2)r^2v^2-\epsilon r^2(v_k^2+v^2)\right)^+r$ satisfies an estimate just like \eqref{key_estimate_2} if we use \eqref{estimate_f} instead of \eqref{estimate_F}.

\smallskip

It remains to prove \eqref{Inp+1cpt}. For this, we split our domain $\Omega$ into four parts $\Omega_1,\dots, \Omega_4$ and show \eqref{Inp+1cpt} on each of these parts separately. The definitions of $\Omega_1,\dots, \Omega_4$ are as follows: For $R>0$ let
\begin{align*}
\Omega_1\coloneqq  \{(r,z)\in\Omega: r< R, \abs{z}< R\}, \ \ \ 
&\Omega_2\coloneqq  \{(r,z)\in\Omega: r\geq R, \abs{z}\geq R\}, \\
\Omega_3\coloneqq  \{(r,z)\in\Omega: r< R, \abs{z}\geq R\}, \ \ \ 
&\Omega_4\coloneqq  \{(r,z)\in\Omega: r\geq R, \abs{z}< R\}.
\end{align*}  

\smallskip

Convergence on $\Omega_1$: Follows from $rv_k \to rv$ in $L^q(K; r\,drdz)$ for every compact subset $K\subset [0,\infty)\times \R$ and every $q \in [1,6)$. This step works independently of the choice of $R>0$.

\smallskip

Convergence on $\Omega_2$: Let $\varepsilon>0$. With the help of \eqref{abkling_5d} we calculate
\begin{align*}
\int_{\Omega_2} \abs{rv_k-rv}^{p+1} r d(r,z) &\leq 2^{p+1} \int_{\Omega_2} r^{p+1} \paren{\vert v_k\vert^{p+1}+\vert v\vert^{p+1}} r d(r,z)   \\ 
&\leq 2^{p+1} C^{p-1} \int_{\Omega_2} r^{-\frac{p-1}{2}}\abs{z}^{-\frac{p-1}{2}} \paren{\abs{v_k(r,z)}^2+\abs{v(r,z)}^2}r^3 d(r,z) \\ 
&\leq C_1 \paren{\norm{v_k}^2_{H^1_{\cyl}(r^3drdz)}+\norm{v}^2_{H^1_{\cyl}(r^3drdz)}}  R^{-(p-1)} \leq C_2 R^{-(p-1)}
\end{align*}
which is less or equal $\varepsilon$ if we choose $R>0$ large enough. 

\smallskip

Convergence on $\Omega_3$: Due to symmetry in $z$-direction it is enough to focus on $\tilde{\Omega}_3\coloneqq \{(r,z)\in \Omega: r<R, z\geq R\}$. Let $\alpha>0$ be arbitrary. Again by \eqref{abkling_5d} we obtain
\begin{align*}
\{(r,z)\in\tilde\Omega_3: v_k(r,z)>\alpha \} \subset \{(r,z)\in\tilde\Omega_3: r\  z^{\frac{1}{3}}\leq C_\alpha\} \eqqcolon S_\alpha,  
\end{align*}
where $C_\alpha=(C/\alpha)^{2/3}$ and $C$ is the constant from \eqref{abkling_5d}. The set $S_\alpha$ has finite measure since
\begin{align*}
\vert S_\alpha\vert \leq\int_R^\infty \int_0^{C_\alpha z^{-1/3}} r^3 dr\  dz= \frac{C_\alpha^4}{4} \int_R^\infty z^{-\frac{4}{3}} dz = \frac{3}{4}C_\alpha^4 R^{-\frac{1}{3}} <\infty.
\end{align*}

By the convergence principle from the remark above and since  by \eqref{H^1(r)gegenH^1(r^3)} $\Vert rv_k\Vert_{L^6(rdrdz)}\leq \|v_k\|_{H^1_{\cyl}(r^3drdz)}$ is bounded we obtain $\int_{S_\alpha} r^{p-1} \vert v_k-v\vert^{p+1} r^3d(r,z)\to 0 \text{ as } k\to\infty$ for $1 \leq p<5$.  
It remains to prove the convergence on $\tilde\Omega_3\setminus S_\alpha$. For allmost all $(r,z)\in \tilde\Omega_3\setminus S_\alpha$ we have that $v(r,z)= \lim_{k\to \infty} v_k(r,z)\leq \alpha$. Hence,
\begin{align*}
\int_{\tilde\Omega_3 \setminus S_\alpha} r^{p-1} \vert v_k-v\vert^{p+1} r^3d(r,z) \leq R^{p-1} (2\alpha)^{p-1}  \int_\Omega |v_k-v|^2 r^3d(r,z) \leq C \alpha^{p-1}.
\end{align*}
In summary, since $\alpha>0$ is arbitrary this shows \eqref{Inp+1cpt} on $\Omega_3$.

\smallskip

Convergence on $\Omega_4$: Again it is enough to focus on $\tilde{\Omega}_4\coloneqq \{(r,z)\in \Omega: r\geq R, 0\leq z< R\}$. Fix $z\in (0,R)$. Let us first show that
\begin{equation} \label{das_mal_zuerst}
\int_{\{r\geq R\}} r^{p-1}\vert v_k(r,z)-v(r,z)\vert^{p+1}r^3 dr \to 0 \text{ as } k\to\infty.
\end{equation} 
Since $v_k(r,\cdot)$ is nonincreasing in its last component we deduce
\begin{equation} \label{monotonie_trick}
\int_0^\infty r^q v_k^q(r,z) r\,dr \leq \frac{1}{z} \int_0^z \int_0^\infty r^q v_k^q(r,\zeta) r\, dr d\zeta \leq  \frac{1}{z} \int_\Omega r^q v_k^q(r,\zeta) r d(r,\zeta)\leq \frac{C}{z}
\end{equation}
for all $q\in [2,6]$ by \eqref{H^1(r)gegenH^1(r^3)}. Thus for $q\in [2,6]$ the sequence $\Vert \cdot v_k(\cdot,z)\Vert_{L^q((0,\infty),r dr)}$ is uniformly bounded in $k\in\N$.
Moreover, \eqref{abkling_5d} implies $v_k(r,z)\leq C(z) r^{-\frac{3}{2}}$ uniformly in $k\in\N$. Hence for $\tilde R >R$ 
\begin{align*}
\int_{\tilde{R}}^\infty  r^{p-1}\vert v_k(r,z)-v(r,z)\vert^{p+1}r^3 dr &\leq (2C(z))^{p-1}\int_{\tilde{R}}^\infty r^{-\frac{p-1}{2}} |v_k(r,z)-v(r,z)|^2 r^3dr \\
&\leq (2C(z))^{p-1}\tilde{R}^{\frac{1-p}{2}} \frac{C}{z} \mbox{ by } \eqref{monotonie_trick}. 
\end{align*}
The last term can be made arbitrarily small provided $\tilde{R}$ is chosen big enough. To finish the proof  of  \eqref{das_mal_zuerst} it remains to prove $\int_R^{\tilde{R}}  r^{p-1}\vert v_k(r,z)-v(r,z)\vert^{p+1}r^3 dr \to 0$ as $k\to\infty$. Since for almost all $z\in (0,R)$ we have $v_k(\cdot,z)\to v(\cdot,z)$ pointwise almost everywhere on $(R,\tilde{R})$ as well as the boundedness of $\Vert \cdot v_k(\cdot,z)\Vert_{L^6((0,\infty),r dr)}$ by \eqref{monotonie_trick} we can apply the convergence principle from the remark above and deduce
\begin{align*}
\int_R^{\tilde R} r^{p-1}\vert v_k(r,z)-v(r,z)\vert^{p+1}r^3 dr \to 0 \text{ as } k\to\infty.
\end{align*}
Hence \eqref{das_mal_zuerst} is accomplished for almost all $z\in (0,R)$.

\smallskip

Defining $\varphi_k(z)\coloneqq \int_{\{r\geq R\}} r^{p-1} \vert v_k(r,z)-v(r,z)\vert^{p+1} r^3dr$ we have $\varphi_k\to 0$ as $k\to\infty$ pointwise almost everywhere in $[0,R)$. The sequence $\paren{\varphi_k}_{k\in\N}$ is bounded in $L^1([0,R), dz)$ since by \eqref{hilfreiche_ungl}
\begin{align*}
\int_0^R\int_{\{r\geq R\}} r^{p-1} \vert v_k(r,z)-v(r,z)\vert^{p+1}  r^3 dr dz \leq C \int_\Omega r^{p-1}\paren{\vert v_k\vert^{p+1}+\vert v\vert^{p+1}} r^3d(r,z) \leq \tilde{C}.
\end{align*}
Moreover, for $p\in (1,3]$, the sequence $\paren{\varphi_k}_{k\in\N}$ is bounded in $W^{1,1}([0,R),dz)$ since
\begin{align*}
\norm{\frac{\partial \varphi_k}{\partial z}}_{L^1([0,R], dz)}^2 & \leq \paren{\int_0^R \int_R^\infty (p+1) r^{p-1} \vert v_k-v\vert^p \abs{\frac{\partial v_k}{\partial z}-\frac{\partial v}{\partial z}} r^3 dr dz}^2 \\
&\leq \paren{\int_\Omega (p+1) r^{p-1} \vert v_k-v\vert^p \abs{\frac{\partial v_k}{\partial z}-\frac{\partial v}{\partial z}} r^3 d(r,z)}^2 \\
& \leq C\int_{\Omega} r^{2p-2} \vert v_k-v\vert^{2p} r^3 d(r,z)\int_{\Omega} \abs{\frac{\partial v_k}{\partial z}-\frac{\partial v}{\partial z}}^2 r^3 d(r,z) \\
&= C \Vert r (v_k-v)\Vert_{L^{2p}(rdrdz)}^{2p} \int_{\Omega} \abs{\frac{\partial v_k}{\partial z}-\frac{\partial v}{\partial z}}^2 r^3 d(r,z) \leq C.
\end{align*}
Hence, by the compact embedding $W^{1,1}([0,R),dz)\hookrightarrow L^1([0,R),dz)$ we conclude that at least a subsequence of $(\varphi_k)_{k\in\N}$ is converging in $L^1 ([0,R), dz)$ to a limit function, which must be $0$ since we have already asserted the pointwise a.e. convergence to $0$ on $[0,R)$. This shows \eqref{Inp+1cpt} on $\Omega_4$ for $p\in (1,3]$. For $p\in (3,5)$ we make use of Hölder's interpolation, namely,
\begin{align*}
\norm{rv_k-rv}_{L^{p+1}_{\cyl}(\Omega_4, rdrdz)}^{p+1}\leq \norm{rv_k-rv}_{L^4_{\cyl}(\Omega_4, rdrdz)}^{4\theta} \norm{rv_k-rv}_{L^6_{\cyl}(\Omega_4, rdrdz)}^{6(1-\theta)} \leq \tilde{C} \norm{rv_k-rv}_{L^4_{\cyl}(\Omega_4, rdrdz)}^{4\theta} \to 0
\end{align*}
as $k\to\infty$, where $\theta\in (0,1)$ is chosen such that $p+1=4\theta+6(1-\theta)$, i.e., $\theta=\frac{5-p}{2}$.

\smallskip

The combination of convergences on $\Omega_1, \dots, \Omega_4$ finally proves \eqref{Inp+1cpt}.
\end{proof}

For our last lemma we need the notion of cylindrical $C_c^\infty$-functions which we introduce now.
\begin{Definition}
A function $u=u(r,z)$ belongs to $C_c^\infty([0,\infty)\times \R)$ if and only if $u\in C^\infty([0,\infty)\times \R)$, $\supp u$ is compact in $[0,\infty)\times\R$ and $\frac{\partial^j u}{\partial r^j}(0,z)=0$ for all odd integers $j\in 2\N-1$. 
\end{Definition}
\noindent 
{\bf Remark:} Since $u\in C_c^\infty([0,\infty)\times \R)$ is equivalent to $\tilde u\in C_c^\infty(\R^5)$ with $\tilde u(x) := u(|(x_1,\ldots,x_4)|,x_5)$ we see that $C_c^\infty([0,\infty)\times \R)$ is dense in $H^1_{\cyl}(r^3drdz)$.

\begin{Lemma}\label{rearrang_inequ}
For $u \in H^1_{\cyl}(r^3drdz)$ we have $\|u^\star\|\leq \|u\|$ where $\star$ denotes Steiner-symmetrization with respect to $z$ and $\|\cdot\|$ is the equivalent norm from Theorem~\ref{gsthm}. Moreover 
$$
I(u) \leq I(u^\star) \quad \mbox{ and } \quad I'(u)[u] \leq I'(u^\star)[u^\star].
$$
\end{Lemma}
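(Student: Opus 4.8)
The plan is to treat the three assertions separately, reducing each to a one-dimensional rearrangement statement in the $z$-variable with $r$ held fixed, since the Steiner symmetrization acts slicewise and every weight occurring depends on $r$ only. Throughout I would assume $u\geq 0$: by definition $u^\star=(|u|)^\star$, the norm satisfies $\|u\|\geq\||u|\|$ (because $|\nabla|u||=|\nabla u|$ a.e.\ and $u^2=|u|^2$), and $I$ and $I'(\cdot)[\cdot]$ depend on $u$ only through $u^2$; hence it suffices to treat $|u|$.

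For the norm inequality I would split $\|u\|^2$ into the Dirichlet part $\int_\Omega|\nabla_{r,z}u|^2r^3\,d(r,z)$ and the potential part $\int_\Omega Vu^2r^3\,d(r,z)$. For the Dirichlet part I would pass to $\R^5$ via the correspondence $u(r,z)\leftrightarrow\tilde u(x):=u(|(x_1,\dots,x_4)|,x_5)$ from the remark preceding the lemma: then $\int_\Omega|\nabla_{r,z}u|^2r^3\,d(r,z)$ equals $\int_{\R^5}|\nabla\tilde u|^2\,dx$ up to the fixed factor $|S^3|$, Steiner symmetrization of $u$ in $z$ corresponds to Steiner symmetrization of $\tilde u$ in $x_5$ (which preserves radial symmetry in $(x_1,\dots,x_4)$, so $\widetilde{u^\star}=\tilde u^\star$), and the classical P\'olya--Szeg\H{o} inequality for Steiner symmetrization on $\R^5$ yields $\int_{\R^5}|\nabla\tilde u^\star|^2\leq\int_{\R^5}|\nabla\tilde u|^2$. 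For the potential part I would exploit that $V$ is reversed Steiner-symmetric: with $M:=\esssup V$ the function $M-V\geq 0$ equals its own Steiner symmetrization. Since rearrangement preserves the $L^2(dz)$-mass slicewise, $\int_\Omega u^2r^3=\int_\Omega(u^\star)^2r^3$, so it remains to show $\int_\Omega(M-V)u^2r^3\leq\int_\Omega(M-V)(u^\star)^2r^3$; this is the slicewise Hardy--Littlewood inequality in $z$, using $(M-V)^\star=M-V$ and $(u^2)^\star=(u^\star)^2$, integrated against $r^3\,dr$.

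For the two functional inequalities I would substitute $w:=ru$, which satisfies $w^\star=ru^\star$ because multiplication by the $z$-independent factor $r$ commutes with the slicewise rearrangement. Setting $h(r,z,s):=s^2f(r,z,s^2)$ and $\tilde G(r,z,s):=\tfrac12 F(r,z,s^2)$, the identities $r^2u^2=w^2$ and $u^2r^3=w^2r$ give
$$
I'(u)[u]=\int_\Omega h(r,z,w)\,r\,d(r,z),\qquad I(u)=\int_\Omega \tilde G(r,z,w)\,r\,d(r,z),
$$
and likewise for $u^\star,w^\star$. As $r$ is $z$-independent it suffices to prove, for a.e.\ fixed $r$, the one-dimensional inequalities $\int_\R\Phi(r,z,w)\,dz\leq\int_\R\Phi(r,z,w^\star)\,dz$ for $\Phi\in\{h,\tilde G\}$. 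I would then invoke Brock's rearrangement inequality \cite{Brock}, whose hypothesis is that $z\mapsto\Phi(r,z,s+\sigma)-\Phi(r,z,s)$ is symmetrically nonincreasing for all $s\geq 0,\sigma>0$. For $\Phi=h$ this is exactly assumption (v). For $\Phi=\tilde G$ I would derive it from (v): taking $s=0$ in (v) shows $h(r,\cdot,\tau)=\varphi_\tau(r,\cdot,0)$ is symmetrically nonincreasing for every $\tau\geq 0$, and since $\partial_s\tilde G(r,z,s)=s f(r,z,s^2)=h(r,z,s)/s$ one has $\tilde G(r,z,s+\sigma)-\tilde G(r,z,s)=\int_s^{s+\sigma}\tfrac{h(r,z,\tau)}{\tau}\,d\tau$, an integral of symmetrically nonincreasing functions of $z$, hence itself symmetrically nonincreasing.

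The routine points are the slicewise measurability and integrability needed to apply the one-dimensional inequalities for a.e.\ $r$ together with Fubini, which are supplied by the embeddings of the first lemma and the density of $C_c^\infty([0,\infty)\times\R)$ in $H^1_{\cyl}(r^3drdz)$. I expect the main obstacle to be the inequality for $I$: mere separate monotonicity of the integrand in $z$ and in the function value is \emph{not} enough for a rearrangement inequality, and the correct sufficient condition is precisely the symmetric-monotonicity of the value-increment in Brock's assumption (v). The decisive observation is that both $I$ and $I'(\cdot)[\cdot]$ reduce, via $w=ru$, to integrands $\tilde G$ and $h$ whose increments inherit this property from (v)—immediately for $h$, and for $\tilde G$ through the representation $\tilde G(r,z,s)=\int_0^s h(r,z,\tau)\tau^{-1}\,d\tau$.
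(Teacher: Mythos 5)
Your proposal is correct, and for two of the three pieces it follows the paper's own route. The potential term is handled exactly as in the paper: slicewise Hardy--Littlewood applied to $\esssup V - V$ (which equals its own Steiner symmetrization by the reversed-symmetry assumption) together with equimeasurability of $u$ and $u^\star$. The inequalities for $I$ and $I'(\cdot)[\cdot]$ are likewise obtained, as in the paper, from Brock's Theorem~5.1: assumption (v) is precisely the increment condition for the integrand $s^2f(r,z,s^2)$, and for $I$ you use the same observation the paper makes, namely that (v) at $s=0$ makes $z\mapsto f(r,z,t)$ symmetrically nonincreasing for each fixed $t$, so that the increment of $F$ is an integral of symmetrically nonincreasing functions; your substitution $w=ru$ only makes explicit the slicewise reduction that the paper leaves implicit when applying Brock's theorem to the weighted integrals. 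The genuine difference is the gradient term. You lift $u$ to the cylindrically symmetric $\tilde u\in H^1(\R^5)$, observe that Steiner symmetrization in $x_5$ preserves radial symmetry in $(x_1,\dots,x_4)$ so that $\widetilde{u^\star}=\tilde u^\star$, and invoke the classical P\'olya--Szeg\H{o} inequality for Steiner symmetrization in $\R^5$ as a black box. The paper instead proves this inequality by hand in the cylindrical variables: the $\partial_z$-part by the one-dimensional P\'olya--Szeg\H{o} inequality slicewise, the $\partial_r$-part by a difference-quotient argument combined with the nonexpansivity of rearrangement (first for $C_c^\infty([0,\infty)\times\R)$ functions), and the general case by density, which requires Burchard's theorem on the $H^1$-continuity of Steiner symmetrization. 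Your route is shorter and absorbs the weight $r^3$ automatically into the $\R^5$ volume element, but it rests on the full-dimensional Steiner P\'olya--Szeg\H{o} inequality, a classical result the paper never quotes (it only cites the Schwarz-symmetrization version from Lieb--Loss); the paper's argument is self-contained modulo the one-dimensional facts plus Burchard's continuity theorem. Both are legitimate proofs.
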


\begin{proof} We begin by recalling several classical rearrangement inequalities from \cite{Lieb}, \cite{LiebLoss}. Recall first the Pólya-Szegö inequality 
\begin{equation} \label{polya-szegoe}
\int_{\R^n} \vert\nabla f^\circledast\vert^2 dx\leq \int_{\R^n}\vert\nabla f\vert^2 dx
\end{equation}
for $f\in H^1(\R^n)$ and $^\circledast$ denoting Schwarz-symmetrization (also called symmetrically decreasing rearrangement). Furthermore we have for $0\leq f,g\in L^2(\R^n)$ the classical rearrangement inequality
\begin{equation} \label{classical} 
\int_\R f g dx \leq \int_\R f^\circledast g^\circledast dx
\end{equation}
and the nonexpansivity of rearrangement
\begin{equation} \label{nonexpansivity}
\int_{\R^n} \vert f^\circledast -g^\circledast\vert^2 dx \leq \int_{\R^n} \abs{f-g}^2 dx.
\end{equation}
From \eqref{polya-szegoe} we immediately receive for $u\in H^1_{\cyl}(r^3drdz)$ that 
\begin{equation}\label{polya-szegoe_for_u}
\int_\R \vert\nabla_{z} u^\star\vert^2 dz\leq \int_\R \vert\nabla_{z}u\vert^2 dz.
\end{equation}
Next we want to establish a similar inequality for $\nabla_r u$. We do this first for $u\in C_c^\infty([0,\infty)\times \R)$. With the help of \eqref{nonexpansivity} we find that
\begin{align*}
\int_\R \abs{\frac{u^\star (r+t,z)-u^\star (r,z)}{t}}^2 dz \leq \int_\R \abs{\frac{u(r+t,z)-u(r,z)}{t}}^2 dz  
\end{align*}	
for almost all $r, t\in [0,\infty)$. Sending $t\to 0$ and using Fatou's lemma on the left side of the inequality yields 
\begin{align} \label{fuerccinf}
\int_\R \vert \nabla_r u^\star\vert^2 dz \leq \int_\R \vert \nabla_r u\vert^2 dz
\end{align}
for $u\in C_c^\infty([0,\infty)\times \R)$ and almost all $r\in [0,\infty)$. Since Steiner Symmetrization is continuous in $H^1$ (see Theorem 1 in \cite{Burchard}) we obtain by approximation that \eqref{fuerccinf} is indeed valid for all $u\in H^1_{cyl}(r^3drdz)$. Together with \eqref{polya-szegoe_for_u} we obtain $\int_\R \vert\nabla_{r,z} u^\star\vert^2 dz\leq \int_\R \vert\nabla_{r,z}u\vert^2 dz$ for almost all $r\geq 0$ and integration leads to 
\begin{align}\label{polsze2}
\int_\R\int_0^\infty \vert\nabla_{r,z} u^\star\vert^2 r^3 dr dz\leq \int_\R\int_0^\infty \vert\nabla_{r,z}u\vert^2 r^3 dr dz.
\end{align}
Fixing $r\in[0,\infty)$ and applying \eqref{classical} to $f(\cdot)=\esssup V-V(r,\cdot)$ and $g(\cdot)=u^2(r,\cdot)$ gives
\begin{align*} 
\int_\R \paren{\esssup V-V(r,\cdot)} u^2(r,\cdot) dz &\leq \int_\R \paren{\esssup V-V(r,\cdot)}^\star \big(u^2\big)^\star (r,\cdot) dz \\
&= \int_\R \paren{\esssup V-V(r,\cdot)} \paren{u^\star}^2(r,\cdot) dz.
\end{align*}
Using $\norm{u(r,\cdot)}_{L^2(\R)}=\Vert u^\star (r,\cdot)\Vert_{L^2(\R)}$ this results in
\begin{align} \label{Vwk}
\int_\R\int_0^\infty V(r,z) \paren{u^\star}^2 r^3 dr dz \leq \int_\R\int_0^\infty V(r,z) u^2 r^3 dr dz.
\end{align}
The combination of (\ref{polsze2}) and (\ref{Vwk}) yields the claimed inequality $\Vert u^\star\Vert^2 \leq \Vert u\Vert^2$.

\smallskip

Assumption (v) on $f$ allows to apply Theorem~5.1 in \cite{Brock} and to deduce
\begin{align} \label{Brock}
I'(u)[u] = \int_\Omega f(r,z,r^2u^2) u^2 r^3d(r,z) \leq \int_\Omega f(r,z,r^2u^{\star 2})u^{\star 2}r^3 d(r,z) = I'(u^\star)[u^\star].
\end{align}
Moroever, using (v) with $s=0$ shows that for all $r\in [0,\infty)$, $\sigma\geq 0$ the function $z \mapsto f(r,z,\sigma^2)$ is symmetrically nonincreasing in $z$ and hence 
$$
\Phi_\sigma(r,z,s) := F(r,z,r^2(s+\sigma)^2)- F(r,z,r^2s^2) = \int_{r^2s^2}^{r^2(s+\sigma)^2} f(r,z,t)\,dt
$$
is symmetrically nonincreasing in $z$. Applying once more Theorem~5.1 in \cite{Brock} yields 
$$
I(u) = \int_\Omega \frac{1}{2r^2}F(r,z,r^2u^2) r^3d(r,z) \leq \int_\Omega \frac{1}{2r^2}F(r,z,r^2u^{\star 2}) r^3 d(r,z) = I(u^\star).
$$
This finishes the proof of the lemma.
\end{proof}

\section{Proof of Theorem \ref{gsthm}} \label{3}

\begin{proof}
Recall from Lemma~\ref{weak_continuity} the definition $I(u)\coloneqq \int_\Omega \frac{1}{2r^2} F(r,z,r^2u^2) r^3 d(r,z)$ for $u\in H^1_{\cyl}(r^3 drdz)$. We show that the assumptions (i)-(iii) of Theorem 12 in \cite{SW} are satisfied. Let $\varepsilon>0$. The growth assumptions (i) and (ii) on $f$ imply that for every $\epsilon>0$ there exists $C_\epsilon>0$ such that the global estimate $0 \leq f(r,z,s) \leq \epsilon + C_\epsilon |s|^\frac{p-1}{2}$ holds. Together with \eqref{hilfreiche_ungl} we obtain 
\begin{align*}
\vert I'(u)[v]\vert &= \abs{ \int_\Omega f(r,z,r^2u^2)uv r^3d(r,z) }\\
& \le \varepsilon \int_\Omega \vert ru\vert \vert rv\vert rd(r,z) + C_\epsilon\int_\Omega \vert ru\vert^p \vert rv\vert rd(r,z) \\
&\le \varepsilon C \norm{u}_{H^1_{cyl}(r^3drdz)}\norm{v}_{H^1_{cyl}(r^3drdz)} + \tilde C_\epsilon \norm{u}^p_{H^1_{cyl}(r^3drdz)} \norm{v}_{H^1_{cyl}(r^3drdz)} 
\end{align*}
Taking the supremum over all $v\in H^1_{cyl}(r^3drdz)$ with $\|v\|_{H^1_{cyl}(r^3drdz)}=1$ we see that
\begin{equation} \label{ws1}
I'(u)= o(\norm{u}) \mbox{ as } u\to 0.
\end{equation}
Moreover, due to assumption (iii) on $f$ the map
\begin{equation} \label{ws2} 
s\mapsto \frac{I'(su)[u]}{s} = \int_\Omega f(r,z,s^2 r^2u^2) u^2 r^3 d(r,z) \mbox{ is strictly increasing for all $u\neq 0$ and $s>0$.}
\end{equation}
Next we claim that 
\begin{equation} \label{ws3}
\frac{I(su)}{s^2}\to \infty \mbox{ as } s\to\infty \mbox{ uniformly for $u$ on weakly compact subsets $W$ of $H^1_{cyl}(r^3drdz)\setminus \{0\}$.} 
\end{equation}
Suppose not. Then there are $(u_k)_{k\in\N}\subset W$ and $s_k\to\infty$ as $k\to \infty$ such that $\frac{I(s_k u_k)}{s_k^2}$ is bounded as $k\to\infty$. But along a subsequence we have $u_k \rightharpoonup u \neq 0$ and $u_k(x)\to u(x)$ pointwise almost everywhere. Let $\Omega^\sharp := \{(r,z)\in \Omega: u(r,z)\not =0\}$. Then $|\Omega^\sharp|>0$ and on $\Omega^\sharp$ we have $\vert s_k u_k(r,z)\vert \to \infty$ as $k\to \infty$.  Fatou's lemma and assumption (iv) on $F$ imply
\begin{align*} 
\frac{I(s_k u_k)}{s_k^2} = \int_\Omega \frac{F(r,z,s_k^2r^2u_k^2)}{2s_k^2 r^2} r^3 d(r,z) \geq  \int_{\Omega^\sharp} \frac{F(r,z,s_k^2 r^2 u_k^2)}{2 s_k^2 r^2 u_k^2} u_k^2 r^3 d(r,z) \to \infty \text{ as } k\to\infty,
\end{align*}
a contradiction. In summary, \eqref{ws1}, \eqref{ws2}, \eqref{ws3} imply that (i)-(iii) of Theorem 12 in \cite{SW} are satisfied. 

\smallskip

Now we take a sequence $(u_k)_{k\in\N}\subset M$ such that $J(u_k)\to \inf_{M} J$ as $k\to\infty$. Since $\Vert\nabla_{r,z} \abs{u_k}\Vert_{L^2}= \Vert\nabla_{r,z} u_k\Vert_{L^2}$ we can assume that $u_k \geq 0$ for all $k\in\N$. Then Theorem 12 in \cite{SW} guarantees that for every $k$ there is a unique $t_k>0$ such that $v_k:= t_k u_k^\star\in M$. We show next that $t_k\leq 1$ for all $k\in\N$. Assume $t_k>1$. Then
\begin{align*}
\int_\Omega f(r,z,r^2 u_k^{\star 2})u_k^{\star 2} r^3d(r,z) &< \int_\Omega f(r,z,t_k^2 r^2 u_k^{\star 2})u_k^{\star 2} r^3d(r,z) \quad \mbox{ by assumption (iii)} \\
&= \Vert u_k^\star \Vert^2  \quad \mbox{ since } t_k u_k^\star\in M \\
& \leq \Vert u_k\Vert ^2 \quad \mbox{ by Lemma~\ref{rearrang_inequ}} \\
& = \int_\Omega f(r,z,r^2u_k^2)u_k^2 r^3 d(r,z) \quad \mbox{ since } u_k\in M.
\end{align*}
This contradicts the inequality $I'(u_k)[u_k]\leq I'(u_k^\star)[u_k^\star]$ from Lemma~\ref{rearrang_inequ} and thus $t_k\leq 1$ for all $k\in\N$.  

\smallskip

Next notice that for fixed $(r,z,s)\in [0,\infty)\times\R\times [0,\infty)$ and $t\in (0,1]$ one has
\begin{align*}
\frac{d}{dt} \paren{t^2f(r,z,s^2)s^2-F(r,z,t^2s^2)} = 2ts^2\paren{f(r,z,s^2)-f(r,z,t^2s^2)} > 0
\end{align*}
since $f$ is strictly increasing in its last variable by assumption (iii). This shows that the map $t\mapsto t^2 f(r,z,s^2)s^2-F(r,z,t^2s^2)$ is strictly increasing for $t\in [0,1]$. From this monotonicity and the inequality $I(t_ku_k)\leq I(t_k u_k^\star)$ from Lemma~\ref{rearrang_inequ} we conclude
\begin{align}
2J(v_k)&=\int_\Omega \paren{t_k^2 \vert\nabla_{r,z} u_k^\star\vert^2+V(r,z)t_k^2u_k^{\star 2} -  \frac{1}{r^2}F(r,z,r^2 t_k^2u_k^{\star 2})}r^3d(r,z) \nonumber \\
&\le \int_\Omega \paren{t_k^2 \vert\nabla_{r,z} u_k\vert^2+V(r,z)t_k^2 u_k^2- \frac{1}{r^2}F(r,z,r^2 t_k^2u_k^2)}r^3d(r,z) \nonumber \\
&=\int_\Omega \frac{1}{r^2} \paren{ f(r,z,r^2u_k^2) t_k^2 r^2 u_k^2 -F(r,z,r^2 t_k^2 u_k^2) }r^3d(r,z) \label{min_seq}\\
&\leq \int_\Omega \frac{1}{r^2} \paren{f(r,z,r^2u_k^2)r^2 u_k^2-F(r,z,r^2 u_k^2)} r^3d(r,z) \nonumber \\
& =2J(u_k). \nonumber
\end{align}
So $(v_k)_{k\in\N}\subset M$ is also a minimizing sequence for $J$ which belongs to $K_{4,1}$. The boundedness of $(v_k)_{k\in \N}$ is established in Proposition~14 in \cite{SW}. Hence, we find $v_\infty\in H_{\cyl}^1(r^3drdz)$ such that $v_k\rightharpoonup v_\infty$ in $H_{\cyl}^1(r^3drdz)$ along a subsequence as $k\to\infty$. In addition, $v_\infty\in K_{4,1}$ due to Lemma~\ref{Kabg} and $v_\infty\not =0$ by Proposition~14 in \cite{SW} where instead of the weak sequential continuity of $I$ on all of $H^1_{\cyl}(r^3drdz)$ we use it only on $K_{4,1}$ as stated in Lemma~\ref{weak_continuity}.

\smallskip

Let us show that $v_\infty\in M$. Since $v_\infty\not =0$ we can choose $t_\infty > 0$ such that $t_\infty v_\infty\in M$. In the same manner as before for the sequence $t_k$ we can show that $t_\infty\leq 1$. Assume $t_\infty<1$. Then as in \eqref{min_seq} and using the weak sequential continuity on $K_{4,1}$ as shown in Lemma~\ref{weak_continuity} we find 
\begin{align*}
2 J(t_\infty v_\infty)& < \int_\Omega \frac{1}{r^2} \paren{f(r,z,r^2v_\infty^2)r^2 v_\infty^2 - F(r,z,r^2v_\infty^2)} r^3 d(r,z) \\
&=\lim_{k\to\infty} \int_\Omega \frac{1}{r^2}\paren{f(r,z,r^2v_k^2)r^2v_k^2-F(r,z,r^2v_k^2)} r^3 d(r,z) \\
&= 2 \inf_M J \leq 2 J(t_\infty v_\infty)
\end{align*}
which is a contradiction. So $t_\infty=1$ and thus $v_\infty\in M$. Then by the weak lower semi-continuity of $\norm{\cdot}$ and once again the weak sequential continuity of $I$ we conclude
\begin{align*}
J(v_\infty) \leq \liminf_{k\to\infty} J(v_k) = \inf_M J \leq J(v_\infty).
\end{align*}
Hence, $v_\infty\in K_{4,1}$ is a minimizer of $J$ on $M$, i.e., a ground state of (\ref{scaequ}) which is Steiner symmetric in $z$ with respect to $\{z=0\}$. 
\end{proof}

\section*{Appendix}
Here we prove that the condition $V\geq 0$ and $\inf_{B_R^c} V>0$ for some $R>0$ implies that on $H^1_{\cyl}(r^3drdz)$ the expression  
$\left(\int_\Omega \paren{\vert\nabla_{r, z}u\vert^2+ V(r,z)u^2} r^3 d(r,z)\right)^\frac{1}{2}$ is an equivalent norm. Suppose not. Then there is a sequence $(u_k)_{k\in\N}$ such that $\Vert u_k\Vert_{L^2(r^3drdz)}=1$ and $\int_\Omega \paren{\vert \nabla_{r,z} u_k\vert^2+V(r,z)u_k^2}r^3d(r,z)\to 0$ as $k\to\infty$. In particular, 
\begin{align} \label{aequivnormbew}
\int_\Omega \vert\nabla_{r,z}u_k\vert^2 r^3d(r,z)\to 0 \text{ and } \int_{B_R^c} u_k^2 r^3d(r,z) \to 0 \text{ as } k\to \infty.
\end{align}
Let $\chi$ denote a smooth cut-off function such that $\chi(r,z)=1$ for $0\leq \sqrt{r^2+z^2}<R$ and $\chi(r,z)=0$ for $\sqrt{r^2+z^2}\geq R+1$. Then $v_k\coloneqq \chi u_k\in H^1_{0,\cyl}(B_{R+1},r^3drdz)$ and
\begin{align*}
\vert\nabla_{r,z}v_k \vert^2=\chi^2 \vert\nabla_{r,z}u_k \vert^2+\vert\nabla_{r,z}\chi \vert^2u_k^2+2u_k\chi \nabla_{r,z}u_k\cdot\nabla_{r,z}\chi.
\end{align*}
Hence, by (\ref{aequivnormbew})
\begin{align}
\int_\Omega \vert\nabla_{r,z} v_k\vert^2 r^3d(r,z)&\leq 2\int_\Omega \chi^2\vert\nabla_{r,z}u_k\vert^2r^3d(r,z) +2\int_\Omega u_k^2 \vert\nabla_{r,z} \chi\vert^2 r^3d(r,z) \label{abschaetz_nabla_vk} \\
&\leq 2\int_\Omega\vert\nabla_{r,z} u_k\vert^2 r^3d(r,z)+2\|\nabla_{r,z} \chi\|^2_\infty \int_{B_{R+1}\setminus B_R} u_k^2 r^3d(r,z) \to 0 \text{ as } k\to\infty. \nonumber
\end{align}
In particular, $\int_{B_{R+1}} \vert\nabla_{r,z} v_k\vert^2r^3d(r,z)\to 0$ as $k\to\infty$. By Poincaré's inequality, $\Vert u_k\Vert_{L^2(r^3drdz)}=1$ and \eqref{aequivnormbew} we see
\begin{align*}
C_P \int_{B_{R+1}} \vert\nabla_{r,z} v_k\vert^2 r^3d(r,z) \geq \int_{B_{R+1}} v_k^2r^3d(r,z)\geq \int_{B_R} u_k^2 r^3d(r,z) = 1-o(1),
\end{align*}
contradicting \eqref{abschaetz_nabla_vk}. \qed

\section*{Acknowledgement}
We gratefully acknowledge financial support by the Deutsche Forschungsgemeinschaft (DFG) through CRC 1173.


\begin{thebibliography}{la Bre}

\bibitem{Fortunato} Azzollini, A., Benci, V., D’Aprile, T. and Fortunato, D.: Existence of static solutions of the semilinear Maxwell equations. Ricerche di matematica, 55(2), 123-137, (2006).

\bibitem{BDPR} Bartsch, T., Dohnal, T., Plum, M. and Reichel, W.: Ground states of a nonlinear curl-curl problem in cylindrically symmetric media. arXiv preprint arXiv:1411.7153, (2014).

\bibitem{bartsch_mederski} Bartsch, T. and Mederski, J.: Ground and bound state solutions of semilinear
time-harmonic Maxwell equations in a bounded domains. Arch. Ration. Mech. Anal., 215(1), 283--306, (2015).

\bibitem{bartsch_mederski_2} Bartsch, T. and Mederski, J.: Nonlinear time-harmonic Maxwell equations in an anisotropic bounded medium. arXiv preprint arXiv:1509.01994, (2015).

\bibitem{benci_fortunato_archive} Benci, V. and Fortunato, D.: Towards a unified field theory for classical electrodynamics. Arch. Ration. Mech. Anal., 173(3), 379--414, (2004). 

\bibitem{Brock} Brock, F.: Continuous rearrangement and symmetry of solutions of elliptic problems. Proceedings of the Indian Academy of Sciences-Mathematical Sciences. Vol. 110. No. 2. Springer India, (2000).

\bibitem{Burchard} Burchard, A.: Steiner symmetrization is continuous in $W^{1,p}$. Geometric \& Functional Analysis GAFA 7.5: 823-860, (1997).

\bibitem{daprile_siciliano} D'Aprile, T. and Siciliano, G.: Magnetostatic solutions for a semilinear perturbation of the
              {M}axwell equations. Adv. Differential Equations, 16(5-6), 435--466, (2011).

\bibitem{Lieb} Lieb, E. H.: Existence and uniqueness of the minimizing solution of Choquard's nonlinear equation. Studies in Applied Mathematics 57: 93-105, (1977).   

\bibitem{LiebLoss} Lieb, E. H. and Loss, M.: Analysis, volume 14 of graduate studies in mathematics. American Mathematical Society, Providence, RI, 4, (2001).

\bibitem{Lions2} Lions, P.-L.: Minimization problems in $L^1(\R^3)$. Journal of Functional Analysis, 41(2), 236-275, (1981).

\bibitem{Lions} Lions, P.-L.: Symétrie et compacité dans les espaces de Sobolev. Journal of Functional Analysis 49.3: 315-334, (1982).  

\bibitem{Liu} Liu, S.: On superlinear problems without the Ambrosetti and Rabinowitz condition. Nonlinear Analysis: Theory, Methods \& Applications, 73(3), 788-795, (2010).

\bibitem{Mederski_2014} Mederski, J.: Ground states of time-harmonic semilinear Maxwell equations in $\mathbb{R}^3$ with vanishing permittivity. arXiv preprint arXiv:1406.4535, (2014), to appear in Arch. Ration. Mech. Anal.

\bibitem{Nehari1} Nehari, Z.: On a class of nonlinear second-order differential equations. Transactions of the American Mathematical Society 95.1: 101-123, (1960).

\bibitem{Nehari2} Nehari, Z.: Characteristic values associated with a class of nonlinear second-order differential equations. Acta Mathematica 105.3: 141-175, (1961).

\bibitem{Stuart} Stuart, C. A.: A variational approach to bifurcation in $L^p$ on an unbounded symmetrical domain. Mathematische Annalen, 263(1), 51-59, (1983). 

\bibitem{SW} Szulkin, A. and Weth, T.: The method of Nehari manifold. Handbook of nonconvex analysis and applications: 597-632, (2010).

\bibitem{Willem} Willem, M.: Minimax theorems. Vol. 24. Springer Science \& Business Media, (1997).

\end{thebibliography}
\end{document}